\documentclass{amsart}

\usepackage{graphicx} 
\usepackage{overpic}

\usepackage{tikz}
\usepackage{tikz-3dplot}
\usetikzlibrary{knots}

\theoremstyle{plain}
\newtheorem{thm}{Theorem}[section]

\newtheorem{lemma}[thm]{Lemma}
\newtheorem{prop}[thm]{Proposition}
\newtheorem{claim}[thm]{Claim}

\theoremstyle{remark}
\newtheorem{remark}[thm]{Remark}

\begin{document}

\title[CAT(0) property and hyperbolicity of affine cactus groups]{CAT(0) property and hyperbolicity of affine cactus groups}

\author{Takatoshi Hama}
\address{Graduate School of Integrated Basic Sciences, Nihon University, 3-25-40 Sakurajosui, Setagaya-ku, Tokyo 156-8550, Japan}
\email{chta23018@g.nihon-u.ac.jp}


\date{\today}

\keywords{Affine cactus group ,Cactus group, CAT(0) group, Hyperbolic group}

\subjclass[2020]{20F65, 20F38, 05E18, 57M60, 20F67}

\begin{abstract}
We show that the affine cactus group is a CAT$(0)$ group for all degrees. 
Furthermore, we show that the affine cactus group $AJ_3$ of degree three is a hyperbolic group. 
\end{abstract}

\maketitle 

\section{Introduction}
The cactus group first appeared in the study of a twisted real form of the Deligne-Mumford space $\overline{M}_{0,n+1}$ of stable genus $0$ curves with $n+1$ marked points \cite{devadoss2000tessellationsmodulispacesmosaic}. 
Furthermore, research on this group from the perspective of geometric group theory has been conducted by Genevois in \cite{genevois2022cactusgroupsviewpointgeometric}. 
Analogously, the affine cactus group was introduced by Ilin, Kamnitzer, Li, Przytycki, and Rybnikov for the study of the moduli space of the cactus flower curves, which serves as an analogue to the aforementioned twisted real form of the Deligne-Mumford space \cite{ilin2024modulispacecactusflower}. 
In addition, Chemin has investigated the affine cactus group from the view point of combinatorial group theory in \cite{chemin2025combinatoricsaffinecactusgroups}. 
In this paper, we focus on the affine cactus group from the perspective of geometric group theory. 
One of the topics in geometric group theory is the study of CAT$(0)$ groups. 
Indeed, Genevois established a criterion equivalent to the statement that a cactus group of any degree is a CAT$(0)$ group. 
Although the CAT$(0)$ property of the affine cactus groups follows from the general theory in \cite{genevois2025graphproductscactusgroups}, we provide a proof here considering the specific structure of the Cayley graph. 
In this paper, we show the following. 
\begin{thm}\label{AJ_CAT0}
    For an integer $n\geq 3$, the affine cactus group $AJ_n$ is a CAT$(0)$ group. 
\end{thm}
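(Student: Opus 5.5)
The approach is to exhibit a geometric action of $AJ_n$ on a CAT$(0)$ square complex built directly from the Cayley graph for the standard generating set. Recall that $AJ_n$ is generated by involutions $s_I$, where $I$ ranges over the proper cyclic intervals of $\mathbb{Z}/n$. The relations are:
\begin{itemize}
\item $s_I^2=1$;
\item $s_Is_J=s_Js_I$ when $I\cap J=\emptyset$;
\item $s_Is_Js_I=s_{\hat I(J)}$ when $J\subset I$, where $\hat I(J)$ is the image of $J$ under the reflection of $I$.
\end{itemize}
I will rewrite the third relation as $s_Is_J=s_{\hat I(J)}s_I$. With this form, every relation other than $s_I^2=1$ is a relation of length $4$ of the shape $ab=cd$. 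This is the feature of the Cayley graph that I will exploit.

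\textbf{Construction.} I build a square complex $X$. Its $1$-skeleton is the Cayley graph of $AJ_n$ with respect to $\{s_I\}$, where each pair of edges coming from $s_I$ and $s_I^{-1}=s_I$ is identified into a single unoriented edge; this is exactly the effect of the relations $s_I^2=1$. Its $2$-cells are squares glued along every closed path $g,\ gs_I,\ gs_Is_J,\ gs_Is_Js_I'\,(=g\cdot s_{J'}),\ g$ read off from a commutation relation or a nesting relation, with squares identified when they have the same boundary. Up to the collapsed bigons, $X$ is the Cayley $2$-complex of the presentation. Two properties follow:
\begin{itemize}
\item $X$ is simply connected, since the presentation is complete.
\item $AJ_n$ acts on $X$ freely on vertices, properly and cocompactly, since there are finitely many generators and finitely many square relations.
\end{itemize}
By Gromov's link condition, it therefore suffices to show that $X$ is locally CAT$(0)$. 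Concretely, I need that the link of a vertex has no bigons and no triangles, i.e.\ the link is a simple graph of girth at least $4$. It is then automatically flag in dimension two.

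\textbf{Describing the link.} By homogeneity I only treat the vertex $1$. Its link has one vertex for each generator $s_I$. Its edges correspond to squares at $1$, and such a square contains exactly the two edges $s_I$ and $s_J$ in the following cases:
\begin{itemize}
\item $I\cap J=\emptyset$ (a commutation square);
\item $J\subset I$, or $I\subset J$ (a nesting square, corner $s_I$ followed by $s_J$, or with the roles exchanged).
\end{itemize}
I also have to account for the corner of a nesting square at which the two edges are $s_I$ and $s_{\hat I(J)}$. That pair is again a nested pair, so it is already included in the list above.

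\textbf{Main obstacle: bigons and triangles in the link.} This is the step I expect to be hardest. It amounts to a combinatorial case analysis on pairs and triples of cyclic intervals. Two things must be checked.
\begin{itemize}
\item \emph{No bigons.} Two different squares at $1$ must never share both edges $s_I,s_J$. For this I would argue that the fourth vertex $s_Is_J$ determines the square, since the relation of length $4$ through a given corner is unique. The delicate cases are intervals that are nearly complementary on the circle, where a pair could a priori be simultaneously disjoint and part of a nesting configuration in the cyclic setting.
\item \emph{No triangles.} Given three pairwise adjacent generators $s_I,s_J,s_K$, I need to rule out that the three corresponding squares bound a triangle in the link.
\end{itemize}
For the second point, rather than forbid triangles outright, I would first try to show the following. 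Whenever $I,J,K$ are pairwise disjoint-or-nested, the three squares extend to a cube, obtained by checking that the relations are compatible on the vertices $s_Is_Js_K$ and so on. In that case I enlarge $X$ to a cube complex by filling all such cubes and verify Gromov's flag condition instead. This is the form in which the argument most likely works: the commutation and nesting relations behave like the cubical relations of a right-angled Coxeter group twisted by the reflections $\hat I$, and pairwise compatibility of intervals should imply triple compatibility. The cyclic wrap-around and the action of $\hat I$ on intervals crossing $0$ will need explicit care, possibly splitting according to whether one interval contains the others.

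\textbf{Conclusion.} Once the link condition is established, the universal cover, which is $X$ itself (possibly with cubes filled in), is a CAT$(0)$ cube complex. $AJ_n$ acts on it geometrically, so $AJ_n$ is a CAT$(0)$ group, which proves Theorem~\ref{AJ_CAT0}.
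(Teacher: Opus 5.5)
Your setup (the Cayley square complex, its simple connectivity, the geometric action, and the reduction to a local condition at $e$) matches the paper's. However, the proposal stops exactly at the step that carries the proof, so there is a genuine gap.

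The girth-$\geq 4$ plan cannot succeed for $n\geq 4$. The generators $\sigma_{1,2},\sigma_{1,3},\sigma_{1,4}$ form a nested chain, and $\sigma_{1,4},\sigma_{1,2},\sigma_{3,4}$ are also pairwise related. Either triple is pairwise adjacent in the link of $e$, so it forms a triangle and there is nothing to rule out. The square complex itself is therefore not CAT$(0)$; only for $n=3$ is the link a $6$-cycle. You are forced into the cube-filled version. There, the decisive claim, that pairwise nested-or-disjoint generators span cubes, appears only as an expectation (``should imply triple compatibility''). Gromov's link condition for the filled complex also needs more than triples. Every $k$ pairwise compatible generators must span a $k$-cube, for every $k$, so that links are flag in all dimensions. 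In addition, distinct cubes must never share a corner. None of this is established, and it is essentially the whole content of the theorem.

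The paper closes this in two steps. First, it uses Proposition~\ref{Median_properties} instead of the link condition. A simply connected square complex with embedded squares, in which no two squares share two consecutive edges and the $3$-cube condition holds, has a median $1$-skeleton, and its cube completion is CAT$(0)$ by Proposition~\ref{Median_leads_CAT0}. So only triples need checking, and higher-dimensional flagness comes for free. Second, it proves the $3$-cube condition by cutting the circle. Rotating indices (the maps $\varphi_i,\psi_i$) so that the starting point of an outermost interval of the triple becomes $1$ turns the three cyclic intervals into ordinary intervals of $[1,n]$. These keep the same nesting/disjointness pattern and the same reflections. Hence the cube already known for the cactus group $J_n$ transports back to $AJ_n$. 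This linearization of a pairwise nested-or-disjoint family of cyclic intervals is the idea your proposal is missing. The same cut linearizes any pairwise compatible family, which is what a direct all-dimensions flag check would need.

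A smaller point concerns the generating set. It includes the $n$ elements $\sigma_{p,p-1}$, whose interval is the whole circle. If ``proper'' means ``not all of $\mathbb{Z}/n$'', you have dropped them. For these generators, nesting must be read as containment of intervals together with their linear order, not as containment of subsets. Distinct full intervals are unrelated, as in the presentation of $AJ_3$; reading nesting as subset containment would make your description of the link wrong.
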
 
Another major topic in geometric group theory is the hyperbolicity of groups. 
Regarding the hyperbolicity of the cactus groups, it is known that they are hyperbolic for degrees $n \leq 4$, while they are not hyperbolic for $n \geq 5$. 
The second result of this paper clarifies that the affine cactus group of degree $3$ is hyperbolic. 
\begin{thm}\label{AJ_hyp}
    The affine cactus group $AJ_3$ of degree three is a hyperbolic group. 
\end{thm}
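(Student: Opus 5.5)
The plan is to compute an explicit presentation of $AJ_3$ and recognize it as a free product of finite groups. Such a product is virtually free, and therefore hyperbolic. If the presentation turns out less degenerate than expected, I will fall back on the CAT$(0)$ structure from Theorem~\ref{AJ_CAT0} together with the flat plane theorem.

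I will work with the presentation of the affine cactus group from \cite{ilin2024modulispacecactusflower}. Its generators are involutions $s_{[i,j]}$ indexed by cyclic intervals $[i,j]\subset\mathbb{Z}/n$ of length between $2$ and $n-1$. Its relations are of three kinds:
\begin{itemize}
\item $s_{[i,j]}^2=1$;
\item $s_{[i,j]}s_{[k,l]}=s_{[k,l]}s_{[i,j]}$ whenever the intervals are disjoint;
\item the nesting relation $s_{[i,j]}s_{[k,l]}s_{[i,j]}=s_{[i+j-l,\,i+j-k]}$ whenever $[k,l]\subset[i,j]$.
\end{itemize}

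The first step is to specialize to $n=3$. The only admissible intervals are the three of length $2$, namely $[1,2],[2,3],[3,1]$. Any two of them intersect, so no commutation relation applies. None is properly contained in another, so the only nesting relations are the trivial ones with $[k,l]=[i,j]$, which just restate $s^2=1$. Hence
\[
AJ_3\cong \mathbb{Z}/2*\mathbb{Z}/2*\mathbb{Z}/2 .
\]

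The second step is to conclude. The kernel of the natural map $\mathbb{Z}/2*\mathbb{Z}/2*\mathbb{Z}/2\to(\mathbb{Z}/2)^3$ is free, by the Kurosh subgroup theorem, and has finite index. So $AJ_3$ is virtually free, hence quasi-isometric to a tree, hence hyperbolic. Equivalently, its Cayley graph with respect to the three involutions is the $3$-regular tree.

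The main obstacle is making sure that the conventions of the paper do not add generators or relations for $n=3$, for instance a rotation generator or a length-one interval generator. If they do, I will not rely on the free product decomposition. Instead I will use the CAT$(0)$ space $X$ on which $AJ_3$ acts geometrically, constructed in the proof of Theorem~\ref{AJ_CAT0} from the Cayley graph. By the flat plane theorem, $AJ_3$ is hyperbolic as soon as $X$ contains no isometrically embedded flat $\mathbb{E}^2$.

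For a cube complex, ruling out flats amounts to checking that the link of every vertex contains no square, or no chain of squares that assembles into a flat. Since commutation relations only arise from disjoint intervals, and for $n=3$ there are essentially none, no $\mathbb{Z}^2$ can arise. I would verify this directly on the finitely many local configurations of the Cayley graph.
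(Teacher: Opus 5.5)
\textbf{Verdict: genuine gap.} Your main route uses the wrong group, and your fallback leaves out the one computation that makes it work.

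\textbf{The main route.} It does not apply to the group in the statement. Here $AJ_n$ has a generator $\sigma_{p,q}$ for every ordered pair $p\neq q$, so the cyclic intervals have lengths $2$ through $n$. The length-$n$ intervals $[p,p-1]_c$ are the whole circle cut at different places. For $n=3$ this gives six generators, not three. Each of $[1,3]_c,[2,1]_c,[3,2]_c$ properly contains two length-two intervals, which produces the nesting relations displayed in the paper: $\sigma_{1,3}\sigma_{1,2}\sigma_{1,3}=\sigma_{2,3}$, $\sigma_{2,1}\sigma_{2,3}\sigma_{2,1}=\sigma_{3,1}$ and $\sigma_{3,2}\sigma_{3,1}\sigma_{3,2}=\sigma_{1,2}$. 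So the obstacle you anticipated does occur, and $AJ_3\not\cong\mathbb{Z}/2*\mathbb{Z}/2*\mathbb{Z}/2$. Its abelianization is $(\mathbb{Z}/2)^4$, since the three length-two generators become equal, rather than $(\mathbb{Z}/2)^3$. Eliminating $\sigma_{2,3}$ and $\sigma_{3,1}$ gives $AJ_3\cong\langle a,x,y,z\mid a^2,x^2,y^2,z^2,[a,zyx]\rangle$ with $a=\sigma_{1,2}$, $x=\sigma_{1,3}$, $y=\sigma_{2,1}$, $z=\sigma_{3,2}$. This is the amalgam $(\mathbb{Z}/2*\mathbb{Z}/2*\mathbb{Z}/2)*_{\mathbb{Z}}(\mathbb{Z}/2\times\mathbb{Z})$ along $\langle zyx\rangle$. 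It is one-ended, since its Cayley complex is a plane, so it is not virtually free. An algebraic proof from this splitting would need a combination theorem (the edge group is malnormal in the free-product factor), not the Kurosh subgroup theorem.

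\textbf{The fallback.} The route through the flat plane theorem can be made to work, but you defer the decisive check, and the reason you give in its place is wrong.
\begin{enumerate}
\item Every square in the Cayley complex of $AJ_3$ comes from a nesting relation; none comes from a commutation. So the absence of commutation relations says nothing about whether squares close up into flats.
\item The flat plane theorem needs the absence of isometrically embedded flats, not the absence of $\mathbb{Z}^2$ subgroups. Whether the latter suffices for CAT$(0)$ groups is open in general.
\end{enumerate}
What you actually have to compute is the vertex link. The pairs of edges at $e$ that span a square are labelled $\{\sigma_{1,2},\sigma_{1,3}\}$, $\{\sigma_{1,3},\sigma_{2,3}\}$, $\{\sigma_{2,3},\sigma_{2,1}\}$, $\{\sigma_{2,1},\sigma_{3,1}\}$, $\{\sigma_{3,1},\sigma_{3,2}\}$ and $\{\sigma_{3,2},\sigma_{1,2}\}$. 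So every link is a hexagon. It follows that there are no $3$-cubes and no link contains a $4$-cycle. A flat in a $2$-dimensional square complex would be a union of squares meeting four at each vertex, so there is no flat.

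This six-squares-per-vertex computation is exactly the content of the paper's proof. The paper reads off from it that the Cayley complex is the $\{4,6\}$ tessellation of $\mathbb{H}^2$, and concludes by quasi-isometry invariance of hyperbolicity. Once you supply the computation, your fallback is essentially the same argument, phrased through the flat plane theorem and the CAT$(0)$ structure of Theorem~\ref{AJ_CAT0}.
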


\begin{remark}
    In \cite{genevois2022cactusgroupsviewpointgeometric}, it has been shown that the cactus group whose degree is $6$ or more is not a hyperbolic group. 
    The pure cactus group of degree $5$ is known to be the fundamental group of the Deligne-Mumford space $\overline{M}_{0,6}$, which is known to be non-hyperbolic in \cite[Example~3.22]{bonifant2018groupactionsdivisorsplane}. 
    Here, the pure cactus group is the kernel of the natural surjection from the cactus group to the symmetric group of the same degree. 
    This implies that the cactus group of degree $5$ is not a hyperbolic group. 
\end{remark}



\section{Cactus group and affine cactus group}\label{subsec21}
In this section, we recall the purely algebraic definition of the cactus group and the affine cactus group. 

For any integer $n \geq 2$, we define the \textit{cactus group} of degree $n$, denoted by $J_n$, as the group given by the presentation with generators $s_{p,q}$ for $1 \leq p < q \leq n$, subject to the following relations. 
\begin{itemize}
   \item $s_{p,q}^2 = e$ for every $1 \leq p < q \leq n$, 
   \item $s_{p,q}s_{m,r} = s_{m,r}s_{p,q}$ for all $1 \leq p < q \leq n$ and $1 \leq m < r \leq n$ satisfying $[p, q] \cap [m, r] = \emptyset$, 
   \item $s_{p,q}s_{m,r} = s_{p+q-r,p+q-m}s_{p,q}$ for all $1 \leq p < q \leq n$ and $1 \leq m < r \leq n$ satisfying $[m, r] \subset [p, q]$. 
\end{itemize}
Here, $e$ denotes the identity element, and $[p, q]$ denotes the set $\{ p, p+1, \dots, q-1, q \}$ of integers for positive integers $p$ and $q$ with $p < q$. We mainly follow the notation for the cactus groups used in \cite{genevois2022cactusgroupsviewpointgeometric}. 

As a ``cyclic version" of the cactus group, the affine cactus group is defined as follows. 
For any integer $n \geq 2$, we define the \textit{affine cactus group} of degree $n$, denoted by $AJ_n$, as the group given by the presentation with generators $\sigma_{p,q}$ for $1 \leq p \neq q \leq n$, subject to the following relations. 
\begin{itemize}
   \item $\sigma_{p,q}^2 = e$ for every $1 \leq p \neq q \leq n$, 
   \item $\sigma_{p,q}\sigma_{m,r} = \sigma_{m,r} \sigma_{p,q}$ for all $1 \leq p \neq q \leq n$ and $1 \leq m \neq r \leq n$ satisfying $[p, q]_c \cap [m, r]_c = \emptyset$, 
   \item $\sigma_{p,q}\sigma_{m,r} = \sigma_{s_{p,q}(r), s_{p,q}(m)}\sigma_{p,q}$ for all $1 \leq p \neq q \leq n$ and $1 \leq m \neq r \leq n$ satisfying $[m, r]_c \subset [p, q]_c$. 
\end{itemize}

Here, $e$ denotes the identity element, and $[p, q]_c$ denotes the sequence $\{ p,  p+1, \dots, q-1, q \}$ if $p<q$ or $\{ q , q+1, \dots, n-1, n, 1, \dots,p-1, p \}$ if $q<p$ of integers for positive integers $p$ and $q$ with $p \neq q$. 
Furthermore, $s_{p,q}$ is the map given by the following. 
\[
\begin{array}{llc}
   s_{p,q}(r)  &=   \begin{cases}
                    p+q - r +n  & \text{if $r\in [p,q]_c$ and $r \geq p+q$}, \\
                    p+q - r  & \text{if $r \in [p,q]_c$ and $r <p + q$}, \\
                    p & \text{otherwise}. 
                \end{cases}
\end{array}
\]
The notation and terminology used in our definition is based on \cite{chemin2025combinatoricsaffinecactusgroups}. 

For the case of degree three, the affine cactus group $AJ_3$ has the following presentation. 
\[ 
\left\langle 
\sigma_{1,2}, \sigma_{2,3}, \sigma_{3,1}, \sigma_{1,3}, \sigma_{2,1}, \sigma_{3,2} 
\left| 
\begin{array}{l}
\sigma_{1,2}^2= \sigma_{2,3}^2= \sigma_{3,1}^2= \sigma_{1,3}^2= \sigma_{2,1}^2= \sigma_{3,2}^2=e,\\ 
\sigma_{1,2} \sigma_{1,3} = \sigma_{1,3} \sigma_{2,3},\ 
\sigma_{2,3} \sigma_{1,3} = \sigma_{1,3} \sigma_{1,2},\\
\sigma_{2,3} \sigma_{2,1} = \sigma_{2,1} \sigma_{3,1},\ 
\sigma_{3,1} \sigma_{2,1} = \sigma_{2,1} \sigma_{2,3},\\ 
\sigma_{3,1} \sigma_{3,2} = \sigma_{3,2} \sigma_{1,2},\ 
\sigma_{1,2} \sigma_{3,2} = \sigma_{3,2} \sigma_{3,1} 
\end{array}
\right. 
\right\rangle. 
\]

The elements of the affine cactus group can be presented by diagrams of vertical strands on the cylinder. 
Some such diagrams for $AJ_3$ are shown in Figure~\ref{Cyl_gens}. 
\begin{figure}[htb]
    \label{Cyl_gens}
    \centering
\begin{overpic}[width=.9\textwidth]{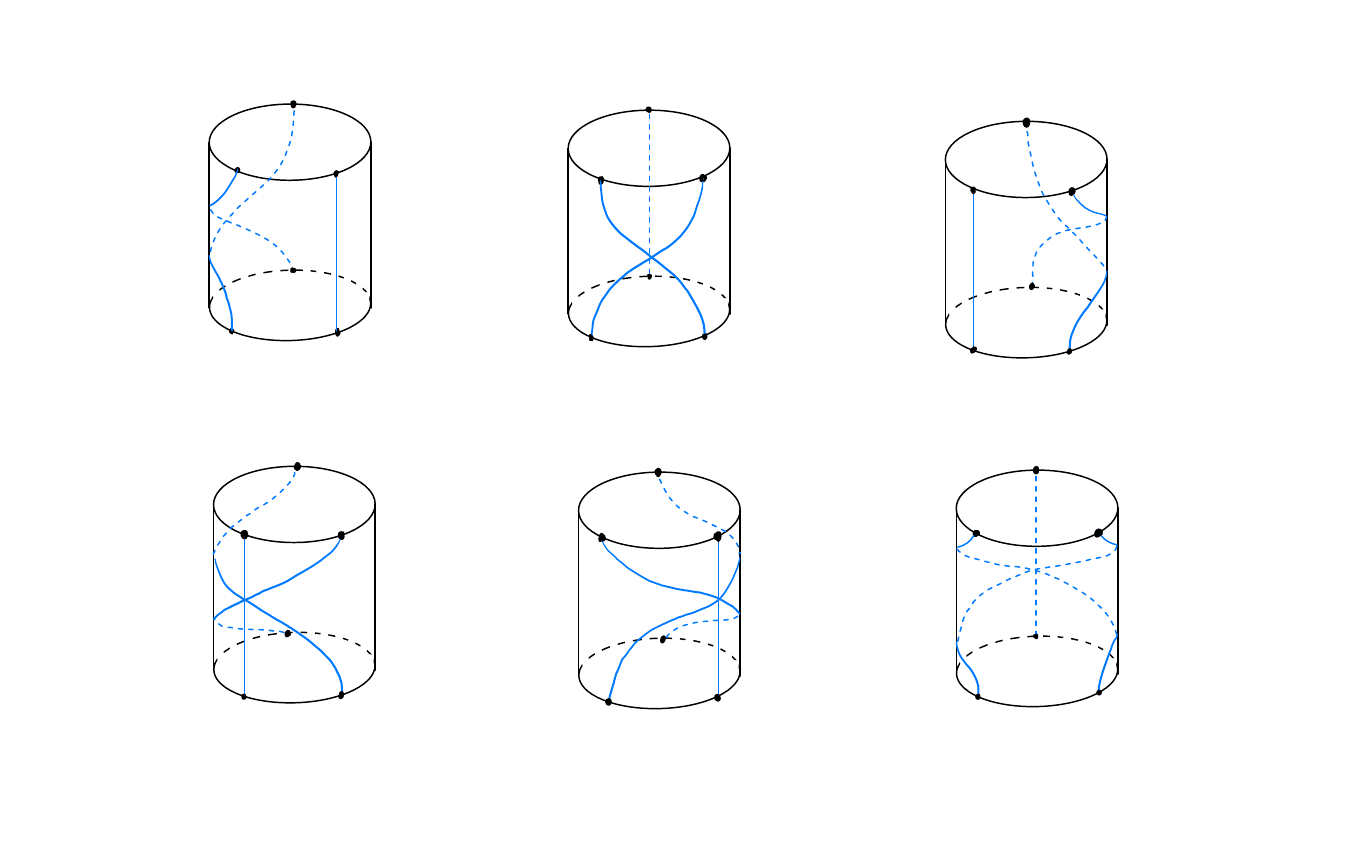}

\put(19,34){$\sigma_{1,2}$}
\put(46,34){$\sigma_{2,3}$}
\put(74,34){$\sigma_{3,1}$}

\put(20,56){1}
\put(13,50){2}
\put(28,50){3}

\put(47,56){1}
\put(40,49){2}
\put(54,49){3}

\put(75,56){1}
\put(67,48){2}
\put(82,48){3}

\put(19,6){$\sigma_{1,3}$}
\put(46,6){$\sigma_{2,1}$}
\put(74,6){$\sigma_{3,2}$}

\put(20,29){1}
\put(13,24){2}
\put(28,24){3}

\put(48,29){1}
\put(40,24){2}
\put(55,24){3}

\put(75,29){1}
\put(68,24){2}
\put(83,24){3}

\end{overpic}
    \caption{The diagrams of generators of $AJ_3$}
\end{figure}
See \cite{chemin2025combinatoricsaffinecactusgroups} for more details. 

As shown in \cite[Proposition 2.6]{genevois2022cactusgroupsviewpointgeometric}, every element of the cactus group has a unique normal form, that is, a uniquely determined word in the presentation. We can show that this is also the case for the affine cactus group in the same way, by cutting the cylinder that supports the diagram of generators where the relations are applied (see Figure~\ref{normal_form}). 

A word $w = \sigma_{p_1, q_1} \sigma_{p_2, q_2} \cdots \sigma_{p_k, q_k}$ representing an element in $AJ_n$ is said to be \textit{in normal form} if it is reduced and satisfies the following condition: 

By applying the commutation and structural relations of $AJ_n$, no generator can be moved further to the left to increase the word's priority, where we prioritize placing generators with smaller starting indices, or those with longer cyclic intervals, as far to the left as possible. More precisely, $w$ is in normal form if it cannot be transformed by:
\begin{itemize}
    \item interchanging commuting disjoint generators to bring the one with a smaller starting index to the left.
    \item Flipping a generator with a longer cyclic interval to the left past a contained generator with a smaller cyclic interval via the relation $\sigma_{s_{p,q}(r), s_{p,q}(m)}\sigma_{p,q} = \sigma_{p,q}\sigma_{m,r}$. 
\end{itemize}

This implies that the word problem for the affine cactus group is solvable. 
We remark that Chemin explicitly shows that the word problem can be solved combinatorially \cite[Corollary~4.1]{chemin2025combinatoricsaffinecactusgroups}. 

\begin{figure}[htb]
    \centering 
    \begin{overpic}[width=1\linewidth]{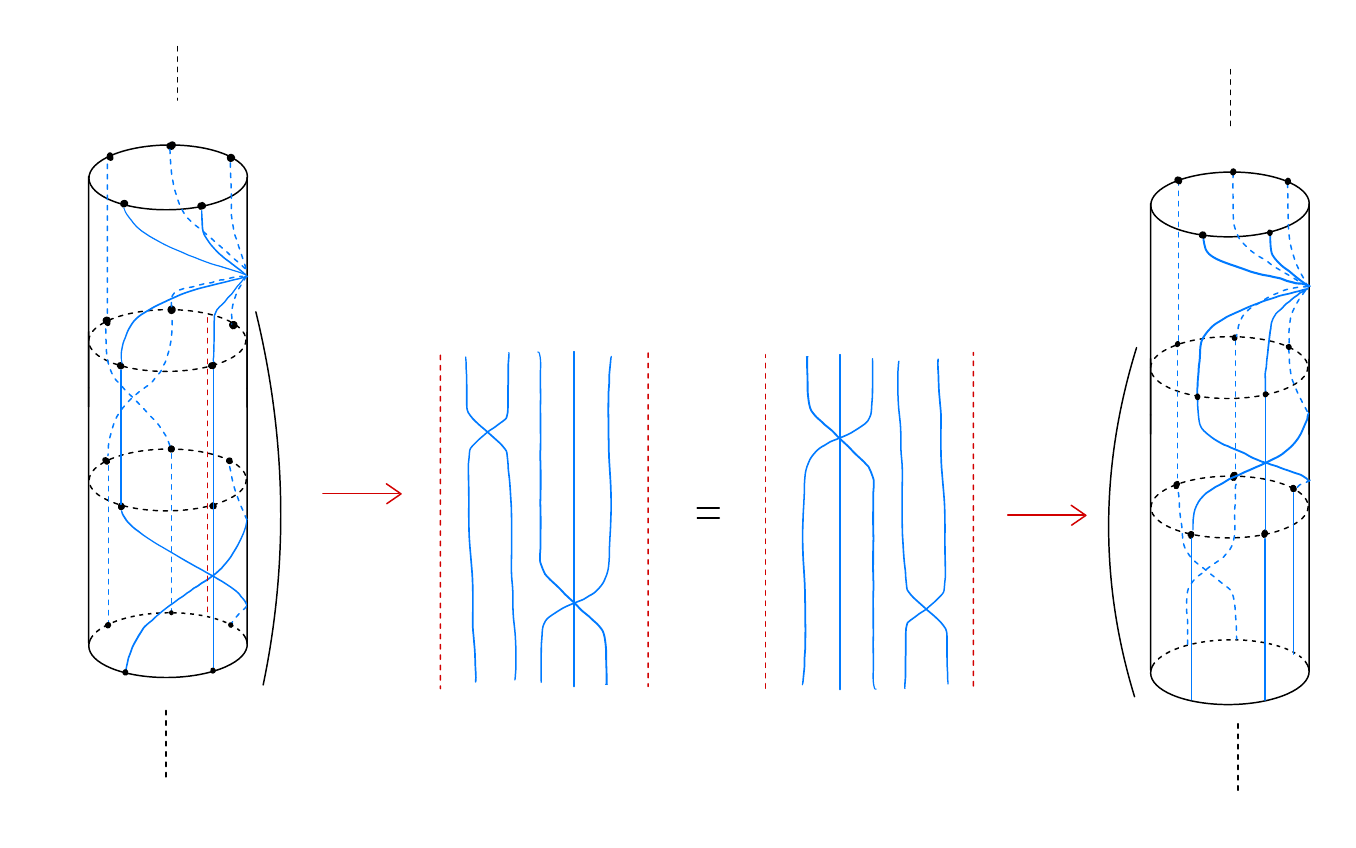}
            \put(7,0.5){$\sigma_{31}\sigma_{12}\sigma_{35}$}
            \put(24,23){cut}

            \put(37,0.5){$\sigma_{12}\sigma_{35}$}
            \put(61,0.5){$\sigma_{35}\sigma_{12}$}

            \put(85,0.5){$\sigma_{31}\sigma_{35}\sigma_{12}$}
            \put(74,22){glue}
            
    \end{overpic}
    \caption{Cutting the cylinder which supports the diagram which are applied the relation.}
    \label{normal_form}
\end{figure}


\section{Proof of Theorem~\ref{AJ_CAT0}}\label{sec:prop} 
In this section, we give a proof of Theorem~\ref{AJ_CAT0}. 
Before starting the proof, we prepare some terminology and state the necessary lemma and propositions. 

For the Cayley graph Cay$(G,S)$, the \textit{Cayley complex} $C(G)$ of Cay$(G,S)$ is the cell complex that is obtained by attaching $2$-cells to each cycle representing a relation in Cay$(G,S)$. See \cite[Section~1.3]{AT} for details. 

In the following, a \textit{cube complex} is defined as a CW complex, which is the quotient of disjoint union cubes $[0,1]^{n_\lambda}$ by gluing together cubes by isometries by their faces. See \cite{MR1744486} for details. 
Thus, all $n$-cells are homeomorphic to a cube $[0,1]^n$ for some integer $n$. 

\begin{lemma}\label{AJcube}
    The Cayley complex $C(AJ_n)$ of $AJ_n$ with respect to standard generators is a cube complex. 
\end{lemma}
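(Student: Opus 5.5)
Every relator in the standard presentation of $AJ_n$ is either a square $\sigma_{p,q}^2$ or has length four, $\sigma_{p,q}\sigma_{m,r}\sigma_{p,q}^{-1}\sigma_{m',r'}^{-1}$. The plan is to build $C(AJ_n)$ with the usual convention for involutions and then check that every cell is a cube glued along faces.

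For an involution $\sigma_{p,q}$ we do not attach a bigon. Instead we identify the two oriented edges $g \to g\sigma_{p,q}$ and $g\sigma_{p,q} \to g$ into a single unoriented edge. This is the standard convention for Coxeter-type presentations, and the resulting complex is homotopy equivalent to the one with bigons attached, since collapsing each bigon onto an edge is a deformation retraction. After this, the $1$-skeleton is the simplicial Cayley graph: its $0$-cells are points ($0$-cubes) and its edges are copies of $[0,1]$.

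For each remaining relation, a commutation $\sigma_{p,q}\sigma_{m,r}=\sigma_{m,r}\sigma_{p,q}$ or a structural relation $\sigma_{p,q}\sigma_{m,r}=\sigma_{m',r'}\sigma_{p,q}$ with $(m',r')=(s_{p,q}(r),s_{p,q}(m))$, the corresponding cycle at a vertex $g$ is
\[
g,\; g\sigma_{p,q},\; g\sigma_{p,q}\sigma_{m,r},\; g\sigma_{m',r'},\; g .
\]
It has length four, and we attach a square $[0,1]^2$ along it. To see that this is an honest square, we check three things.
\begin{itemize}
\item[(i)] The four vertices are pairwise distinct. Since the relation holds, this amounts to showing that none of $\sigma_{p,q}$, $\sigma_{m,r}$, $\sigma_{p,q}\sigma_{m,r}$ is trivial in $AJ_n$. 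This follows from the uniqueness of normal forms: the reduced words involved are nonempty normal forms, or have length $2$ with distinct letters because $[m,r]_c\neq[p,q]_c$. Alternatively, the homomorphism to the extended affine symmetric group via the strand diagrams separates them.
\item[(ii)] The attaching map is an embedding of $\partial[0,1]^2$, because the four edges join distinct vertex pairs.
\item[(iii)] The four-cycle obtained from the same relation by starting at a different vertex, or by reading the relation backwards, is the same cycle. Multiplying the relation by the involutions on both sides turns it into the relation read from the other corners; for example $\sigma_{m',r'}\sigma_{p,q}\sigma_{m,r}=\sigma_{p,q}$ comes from the relation with $[m',r']_c\subset[p,q]_c$. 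So we attach one square per unoriented $4$-cycle, not several parallel copies. Keeping parallel copies would also still give a cube complex.
\end{itemize}

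Having verified this, we conclude that $C(AJ_n)$ is a $2$-dimensional CW complex whose cells are $0$-, $1$- and $2$-cubes, and each attaching map glues faces isometrically. Hence it is a cube complex in the sense above. If the intended object is the cube completion that fills in higher cubes for pairwise-commuting families of squares, the same check applies face by face. The main obstacle is point (i), showing the relator cycles are embedded squares rather than degenerate ones. The plan there is to use the faithful strand-diagram picture, or the normal form of \cite[Proposition 2.6]{genevois2022cactusgroupsviewpointgeometric} adapted to $AJ_n$ as described above.
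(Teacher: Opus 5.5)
Your proposal is correct and rests on the same idea as the paper's proof: apart from the involution relators, every relator has length four, so each 2-cell of $C(AJ_n)$ is attached along a $4$-cycle and is a square. You are more careful than the paper, which asserts that all relations have length four while overlooking the relators $\sigma_{p,q}^2$ (you handle these with the standard edge-identification convention). The paper also leaves the non-degeneracy of these $4$-cycles to the verification of condition (2) in the proof of Theorem~\ref{AJ_CAT0}, where it uses the same normal-form argument you propose in (i).
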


\begin{proof}
    Since the relations of the standard presentation of $AJ_n$ are given by words of length four, the minimum length of the cycles around $e$ in Cay$(AJ_n,S)$ is $4$. By the left $G$-action, the minimum lengths of the cycles around each vertex are $4$. 
    The definition of the Cayley complex concludes that all $2$-cells in $C(AJ_n)$ are homeomorphic to $I^2$. 
\end{proof}

Next, we recall the definitions of the CAT$(0)$ space, the CAT$(0)$ group, the median graph, and the cube completion. 

A metric space $(X, d)$ is called a \textit{CAT(0) space} if every geodesic triangle $(\gamma_0: [0,L_1] \to X, \gamma_1: [0,L_2] \to X, \gamma_2: [0,L_3] \to X)$ in $X$ satisfies the following condition: 
For a comparison triangle $(\gamma_0' : [0,L_1] \to \mathbb{R}^2, \gamma_1' : [0,L_1] \to \mathbb{R}^2, \gamma_2' : [0,L_2] \to \mathbb{R}^2)$ in the Euclidean plane $(\mathbb{R}^2, d_{\mathbb{R}^2})$ with the same side lengths, for any indices $j, k \in \{0, 1, 2\}$, and for any $s \in [0, L_j]$ and $t \in [0, L_k]$, the following inequality holds. 
\[
d(\gamma_j (s), \gamma_k (t)) \leq d_{\mathbb{R}^2} (\gamma_j' (s), \gamma_k' (t)) 
\]
A group is called a \textit{CAT$(0)$ group} if it admits a properly discontinuous and cocompact action on a non-empty CAT$(0)$ space. 

A graph $X$ is called a \textit{median graph} if for any triple of vertices $x_1$, $x_2$, and $x_3$ in $X$, there exists a unique vertex $m$ such that it satisfies the following. 
\[ \displaystyle
d(x_i, x_j) = d(x_i, m) + d(m, x_j) \ \text{for all $i \neq j$} 
\]
There is a close relationship between the CAT$(0)$ spaces and the median graphs. This relationship is encapsulated in the following propositions. 
\begin{prop}[{\cite[Theorem~2.3]{genevois2022cactusgroupsviewpointgeometric}}]\label{Median_leads_CAT0}
 A graph is median if and only if its cube completion is $CAT(0)$.
\end{prop}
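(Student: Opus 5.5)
The statement is the classical characterization of median graphs as the $1$-skeleta of CAT$(0)$ cube complexes (Chepoi, Roller, Gerasimov). I would prove the two directions separately. Throughout, the cube completion $X^\square$ of a graph $X$ is the cube complex obtained by filling with a cube every subgraph isomorphic to the $1$-skeleton of a cube, endowed with the piecewise Euclidean metric. The organizing tool is Gromov's link condition: a cube complex is CAT$(0)$ if and only if it is simply connected and the link of every vertex is a flag simplicial complex.

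\emph{Median implies CAT$(0)$.} I would verify three facts, in this order.
\begin{enumerate}
\item Median graphs satisfy the \emph{square (quadrangle) condition}. If $d(x,u)=d(x,v)=k$, the vertices $u,v$ are adjacent to a common vertex $w$, and $d(x,w)=k+1$, then there is a vertex $z$ adjacent to both $u$ and $v$ with $d(x,z)=k-1$. To see this, take $z$ to be the median of $x,u,v$. A median graph is also bipartite, since an odd cycle would produce a triple with no median.
\item Simple connectivity. Every cycle in $X$ is a product of $4$-cycles. I would prove this by induction on the maximal distance from a basepoint $x$ to a vertex of the cycle: the square condition lets one push a farthest vertex of the cycle toward $x$ across a square.
\item Flagness of links. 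This amounts to the \emph{cube condition}: three squares pairwise sharing an edge at a common vertex span a $3$-cube, and inductively $k$ pairwise adjacent edges spanning squares span a $k$-cube. For three squares I would produce the eighth vertex as the median of the three far corners and check that the distances come out right. The higher-dimensional case follows by induction.
\end{enumerate}
The cube condition is also what guarantees that $X^\square$ is a genuine cube complex, with cubes meeting along faces. With (2) and (3) in hand, Gromov's criterion gives that $X^\square$ is CAT$(0)$.

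\emph{CAT$(0)$ implies median.} I would use hyperplanes of the CAT$(0)$ cube complex $X^\square$ (Sageev). Each hyperplane is embedded, two-sided, and separates the space into exactly two halfspaces. Moreover, the combinatorial distance between two vertices equals the number of hyperplanes separating them. For vertices $x_1,x_2,x_3$, I would choose for each hyperplane the halfspace containing at least two of the three points. These choices are pairwise consistent, meaning any two of the chosen halfspaces intersect, and only finitely many of them differ from the halfspaces containing $x_1$. The Helly property for halfspaces then shows that their intersection is a single vertex $m$. Counting separating hyperplanes gives $d(x_i,x_j)=d(x_i,m)+d(m,x_j)$ for all $i\neq j$. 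Uniqueness follows because any other vertex with this property must lie on the majority side of every hyperplane.

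\emph{Main obstacle.} I expect the hardest step to be the hyperplane theory in the second direction: showing that hyperplanes in a CAT$(0)$ cube complex are embedded and separating, and that distance counts separating hyperplanes. Here I would rely on the standard arguments via disk diagrams, or cite Sageev. In the context of this paper, however, the proposition is quoted from \cite{genevois2022cactusgroupsviewpointgeometric}, and only the direction ``median $\Rightarrow$ CAT$(0)$'' is actually needed to deduce Theorem~\ref{AJ_CAT0}.
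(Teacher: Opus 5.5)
Your sketch is correct, and it is the standard proof of this equivalence. The paper gives no argument of its own here: it quotes Genevois' Theorem~2.3, which goes back to Chepoi (and independently Roller).

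Your direction ``median $\Rightarrow$ CAT$(0)$'' is essentially Chepoi's argument. It runs through the quadrangle condition, then simple connectivity by pushing a farthest vertex toward the basepoint, then the $3$-cube condition via the median of the three far corners, and finally Gromov's link criterion.

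For the converse you take a different route from Chepoi, namely the halfspace route: Sageev's hyperplanes, majority vote and Helly. Chepoi's proof, to which the paper points, instead derives medianness from a local-to-global criterion without hyperplanes. That criterion is simple connectivity plus local conditions on squares and $3$-cubes, a statement of the same shape as Proposition~\ref{Median_properties}. Your route gives an explicit description of the median and of the distance. The price is importing the hyperplane theory, which you correctly identify as the real work. The local-to-global route is the one that can be checked directly on a Cayley complex, and this is exactly how the paper proceeds for Theorem~\ref{AJ_CAT0}. As you note, only the median $\Rightarrow$ CAT$(0)$ direction is used there.

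Two points need tightening.

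First, you attribute the fact that the cube completion is a genuine cube complex to the $3$-cube condition. What you actually need before invoking the link criterion is that links are simplicial complexes, in particular that two distinct squares never share two consecutive edges. This comes from the absence of $K_{2,3}$ in a median graph: two vertices with three common neighbours would give a triple with two medians. The same fact is also what makes your three far corners pairwise distinct, and what shows their median is not adjacent to the base vertex. Without it, the ``check that the distances come out right'' step does not close.

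Second, the Bridson--Haefliger form of Gromov's criterion assumes finite dimension. For median graphs containing cubes of unbounded dimension, you need Leary's infinite-dimensional version of the criterion. This is harmless for the paper, since the Cayley graphs there are locally finite.
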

Here, the \textit{cube completion of a median graph} is the cube complex
obtained by filling with cubes all the subgraphs isomorphic
to one-skeletons of cubes. See \cite{genevois2025cat0cubecomplexesreplaced} and \cite[Theorem~6.1]{ChepoiCube} for details. 
\begin{prop}[{\cite[Theorem~2.4]{genevois2022cactusgroupsviewpointgeometric}}]\label{Median_properties}
 Let $X$ be a cube complex. Assume that the following hold: 
\begin{enumerate}
    \item $X$ is simply connected; 
    \item all the squares are embedded; 
    \item two distinct squares never share two consecutive edges; 
    \item a cycle of three squares spans the two-skeleton of a 3-cube. 
\end{enumerate}
    Then the one-skeleton $X^{(1)}$ of $X$ is a median graph. Moreover, every $4$-cycle in $X^{(1)}$ bounds a square in $X$. 
\end{prop}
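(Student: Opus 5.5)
The plan is to deduce the statement from Gromov's link condition and the Cartan--Hadamard theorem, and then to invoke the Chepoi--Roller correspondence between CAT$(0)$ cube complexes and median graphs. The hypotheses (1)--(4) only involve the two-skeleton, so the first step is to pass to a better-behaved complex. Let $X'$ be the complex obtained from the two-skeleton $X^{(2)}$ by inductively filling cubes. At stage $k\geq 3$ we glue a $k$-cube along every subcomplex isomorphic to the boundary $(k-1)$-skeleton of a $k$-cube. In particular every cycle of three squares, which by (4) spans the two-skeleton of a $3$-cube, receives a $3$-cube. Only cells of dimension $\geq 3$ are added, so $\pi_1(X')=\pi_1(X^{(2)})=\pi_1(X)$, and by (1) $X'$ is simply connected. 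Moreover $X'$ has the same one-skeleton and the same squares as $X$.

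Next I would check Gromov's link condition at each vertex $v$ of $X'$.
\begin{itemize}
\item The vertices of $\mathrm{lk}(v)$ are the edge-ends at $v$, and its edges are the corners of squares at $v$.
\item Condition (2), that squares are embedded, ensures that no square has two corners at $v$ joining the same edge-end to itself. Hence $\mathrm{lk}(v)$ has no loops.
\item Condition (3), that two distinct squares never share two consecutive edges, ensures that there are no double edges. So the one-skeleton of $\mathrm{lk}(v)$ is a simplicial graph.
\item A triangle in this graph is exactly a cycle of three squares around $v$. By (4) and the construction of $X'$, it bounds a $3$-cube, i.e.\ a $2$-simplex of the link.
\item For higher simplices, I argue by induction on $k$. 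A $(k+1)$-clique in $\mathrm{lk}(v)$ gives $k+1$ edges at $v$, pairwise spanning squares. By induction, every $k$-subset spans a $k$-cube at $v$. One must check that these faces close up to the boundary of a $(k+1)$-cube, i.e.\ that the opposite corners agree. I would do this using (4) on the three squares involved in each triple of directions, together with (3) to identify the faces. Once this holds, the construction has glued in the $(k+1)$-cube.
\item One must also rule out two distinct cubes on the same corner. This again follows from (3) applied to the two-faces.
\end{itemize}
This makes every link a flag simplicial complex. Hence $X'$ is locally CAT$(0)$, and, being simply connected, it is CAT$(0)$ by Cartan--Hadamard. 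By Proposition~\ref{Median_leads_CAT0} (equivalently Chepoi's theorem), the one-skeleton $X'^{(1)}=X^{(1)}$ is median.

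For the final assertion, take a $4$-cycle $\gamma$ in $X^{(1)}$. A median graph is bipartite, and $\gamma$ cannot have a repeated vertex without backtracking. I would therefore use the hyperplane structure of the CAT$(0)$ cube complex $X'$. Each hyperplane crosses $\gamma$ an even number of times, so $\gamma$ crosses exactly two hyperplanes, each twice, and these hyperplanes must be transverse. A minimal-area disc diagram for $\gamma$ then reduces to a single square. Alternatively, the median of three consecutive vertices of $\gamma$ together with the uniqueness of medians forces $\gamma$ to be the boundary of a square of $X'$. Since $X'$ and $X$ have the same squares, $\gamma$ bounds a square of $X$.

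I expect the main obstacle to be the flagness of the links in dimensions $\geq 4$. Specifically, one must show that pairwise-square-spanning edges at $v$ genuinely assemble into the boundary of a cube in $X'$, with no inconsistent identifications. My first approach is the induction above, using (3) to force uniqueness of the square on any two consecutive edges and (4) to fill each triple of directions.
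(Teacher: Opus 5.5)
Your plan breaks at the step you flagged, and the problem is substantive. Flagness of links in dimension $\geq 4$ cannot be obtained from (2)--(4) by local arguments.

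Your treatment of loops, double edges and triangles in $\mathrm{lk}(v)$ is fine. Conditions (3) and (4) also suffice, as you suggest, to assemble the $k$-cubes at $v$ and at its neighbours into a well-defined cubical map $f\colon\partial I^{k+1}\to X'$. What they cannot give is that $f$ is injective, and your construction only fills subcomplexes \emph{isomorphic} to $\partial I^{k+1}$.

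Here is an example. Take $X=\partial I^4/\{\pm\}$, the boundary of the $4$-cube modulo the antipodal map $x\mapsto(1,1,1,1)-x$; this is a cubulation of $\mathbb{RP}^3$ with $1$-skeleton $K_{4,4}$. Every vertex has four edges, and the following hold:
\begin{itemize}
\item all squares are embedded;
\item at every vertex, each pair of the four edges spans exactly one square;
\item every cycle of three squares spans an embedded $3$-cube.
\end{itemize}
So (2)--(4) hold at every vertex. But the four edges at a vertex $v$ assemble into $\partial I^4$ with antipodal vertices identified. Hence your $X'$ has $\mathrm{lk}(v)=\partial\Delta^3$, which is not flag. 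Gluing a $4$-cube along the non-injective map instead would put two $3$-simplices on the same vertex set, so the link would not be simplicial. This $X$ is not simply connected, so it does not contradict the proposition. It does show that the link condition depends on (1), which your plan only invokes afterwards, in Cartan--Hadamard.

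So you need a global argument that feeds simple connectivity into the construction of the cubes. One option: simple connectivity makes the $\mathbb{Z}/2$-cochain dual to a hyperplane (a class of edges under ``opposite in a square'') a coboundary. That separates the vertices of an assembled cube boundary, provided its $k+1$ directions lie in distinct hyperplanes. However, excluding self-intersecting hyperplanes already needs a minimal disc diagram argument in $X^{(2)}$ using (3) and (4). Another option is a local-to-global argument in the style of Chepoi, which uses simple connectivity by building the universal cover ball by ball and checking weak modularity and the absence of induced $K_{2,3}$. The paper itself gives no proof and simply quotes the statement from Genevois. Either way, this is the substance of the proposition, not a routine verification.

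A minor point on the last part: the median of three consecutive vertices of $\gamma$ is just the middle vertex, so that alternative gives nothing. What you need is that in a CAT$(0)$ cube complex, two edges at a vertex dual to transverse hyperplanes span a square. Uniqueness of medians then excludes a second common neighbour.
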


Here, \textit{a square} is a $2$-cell in a cube complex, and 
\textit{a cycle of three squares} means that a subcomplex consisting of three squares as shown in Figure~\ref{CycleOfThreeSquares}. 








\begin{figure}[htb]\label{CycleOfThreeSquares}
\begin{tikzpicture}
    \def\s{2}    
    \def\dx{-0.8} 
    \def\dy{0.6}  

    \coordinate (B1) at (\dx, \dy);          
    \coordinate (B2) at (\s+\dx, \dy);       
    \coordinate (B3) at (\s+\dx, \s+\dy);    
    \coordinate (B4) at (\dx, \s+\dy);       

    \coordinate (F1) at (0, 0);              
    \coordinate (F2) at (\s, 0);             
    \coordinate (F4) at (0, \s);             


    \draw (F1) -- (F2) -- (B2) -- (B1) -- cycle;

    \draw (B1) -- (B2) -- (B3) -- (B4) -- cycle;

    \draw (F1) -- (B1) -- (B4) -- (F4) -- cycle;

    \foreach \v in {B1, B2, B3, B4, F1, F2, F4} {
        \fill (\v) circle (2.5pt);
    }

\end{tikzpicture}
\caption{Cycle of three squares}
\end{figure}

The proof of Theorem~\ref{AJ_CAT0} is obtained in a similar way as the proof for the cactus group $J_n$ in \cite[Theorem~2.7]{genevois2022cactusgroupsviewpointgeometric}. 

\begin{proof}[Proof of Theorem~\ref{AJ_CAT0}] 
    In general, the left $G$-action of a finitely generated group on itself is properly discontinuous and cocompact. Therefore, the left $G$-action on $C(AJ_n)$ by $AJ$ is properly discontinuous and cocompact. 
    Since the action on $\text{Cay}(AJ,S)$ induces an action on its cube completion, by Lemma~\ref{AJcube} it suffices to show that the Cayley complex $\mathcal{C}(AJ_n)$ of $AJ_n$ satisfies conditions (1), (2), (3), and (4) of Proposition~\ref{Median_properties}. 
    First, by the definition of the Cayley complex, $\mathcal{C}(AJ_n)$ is simply connected. 
    Then $\mathcal{C}(AJ_n)$ satisfies the condition (1). 
    
     Second, we confirm that $C(AJ_n)$ satisfies the condition (2). 
    Consider a square spanned by a $4$-cycle $[e, \sigma_{i,j}, g, \sigma_{k,l}, e]$ that contains $e$ as a vertex. For the square, there are the following three cases. 

    \begin{itemize}
        \item For the vertices $\sigma_{i,j}$ and $\sigma_{k,l}$, $[i,j]_c$ is contained in $[k,l]_c$. 
    
        \item For the vertices $\sigma_{i,j}$ and $\sigma_{k,l}$, $[k,l]_c$ is contained in $[i,j]_c$. 
        
        \item For the vertices $\sigma_{i,j}$ and $\sigma_{k,l}$, $[i,j]_c$ and $[k,l]_c$ are disjoint. 
    \end{itemize}
    
    Since the difference between the first and the second cases is notational, it suffices to consider the first and the third cases without loss of generality. 
    
    In the first case, the relation of $AJ_n$ concludes that the vertex $g$ is equal to $\sigma_{i,j} \sigma_{k,l} = \sigma_{k,l} \sigma_{s_{k,l}(j), s_{k,l}(i)}$, and $\sigma_{k,l} \sigma_{s_{k,l}(j), s_{k,l}(i)}$ is the normal form of $g$. 
    Since all vertices do not coincide with each other, the cycle is embedded. 
    
    In the third case, the relation of $AJ_n$ concludes that the vertex $g$ is equal to $\sigma_{i,j} \sigma_{k,l} = \sigma_{k,l} \sigma_{i,j}$. Without loss of generality, we can assume that $i$, $j$, $k$, $l$ satisfy $i<j<k<l$. In this case, the word $\sigma_{i,j} \sigma_{k,l}$ is the normal form of $g$. 
    Since all vertices do not coincide with each other, the cycle is embedded. 
    
    It concludes that the squares around $e$ are embedded. 
    By the left $G$-action, the squares around $e$ are transported to the squares around all vertices. 
    Therefore, all the squares are embedded, and $\mathcal{C}(AJ_n)$ satisfies the condition (2). 

    We next show that $C(AJ_n)$ satisfies the condition (3). Suppose, for the sake of contradiction, that two distinct $4$-cycles $[e, \sigma_{i,j}, g_1, \sigma_{k,l}, e]$ and $[e, \sigma_{i,j}, g_2, \sigma_{k,l}, e]$ share the consecutive edges $[\sigma_{i,j}, e]$ and $[e, \sigma_{k,l}]$. 
    Since the cycles in a Cayley graph define group relations, the intervals $[i,j]_c$ and $[k,l]_c$ must be nested or disjoint. 
    
    For the case where $[i,j]_c$ and $[k,l]_c$ are nested, without loss of generality, we can assume that $[i,j]_c$ is contained in $[k,l]_c$. The relations around $e$ imply $g_1 = g_2 = \sigma_{k,l} \sigma_{s_{k,l}(j), s_{k,l}(i)}$. 
    
    For the case where $[i,j]_c$ and $[k,l]_c$ are disjoint, 
    similarly, the group relations yield $g_1 = g_2 = \sigma_{k,l} \sigma_{i,j}$. 
    
    In both cases, $g_1 = g_2$, meaning that 
    the two cycles coincide. This contradicts the initial assumption. 
    By the left $G$-action, the squares around $e$ are transported to the squares around all vertices. Thus, each two distinct cycles do not share the consecutive edges. 
    
    Next, let us show that $C(AJ_n)$ satisfies the condition (4). 
    Consider a cycle of squares. We can assume that all squares in the cycle of squares contain $e$ without loss of generality. Thus, let the generators $\sigma_{i,j}$, $\sigma_{k,l}$, and $\sigma_{q,r}$ span a cycle of squares. 
    This assumption means that each two generators of the three involve the relation. 
    Since the difference between generators is notational, it is enough to consider only the following four cases without loss of generality. 
    \begin{itemize}
        \item For the vertices $\sigma_{i,j}$, $\sigma_{k,l}$ and $\sigma_{q,r}$, $[i,j]_c$ contains $[k,l]_c$, $[k,l]_c$ contains $[q,r]_c$. 
        
        \item For the vertices $\sigma_{i,j}$, $\sigma_{k,l}$ and $\sigma_{q,r}$, $[i,j]_c$ contains $[k,l]_c$, $[i,j]_c$ and $[q,r]_c$ are disjoint. 

        \item For the vertices $\sigma_{i,j}$, $\sigma_{k,l}$ and $\sigma_{q,r}$, $[i,j]_c$ contains $[k,l]_c$ and $[q,r]_c$, $[k,l]_c$ and $[q,r]_c$ are disjoint. 

        \item For the vertices $\sigma_{i,j}$, $\sigma_{k,l}$ and $\sigma_{q,r}$,  $[i,j]_c$, $[k,l]_c$, and $[q,r]_c$ are disjoint from each other. 
    \end{itemize}

    We prepare a map $\bar{\ }:  \mathbb{Z} \to [1,n]$, which maps $z$ to $\bar{z}=z + an \in [1,n]$ for some integer $a$, i.e., $\overline{z}$ is equal to $z$ modulo $n$. 
    For the subset $\{ \sigma_{i,j},\ \sigma_{k,l},\ \sigma_{q,r} \}$, that satisfies one of the four conditions above, let $\varphi_{i}$ be a correspondence from $\{ \sigma_{i,j},\ \sigma_{k,l},\ \sigma_{q,r} \}$ to a subset of the standard generators of $J_n$ given by the following: 
    \[
    \sigma_{u,v} \longmapsto s_{\overline{u-i+1},\overline{v-i+1}}
    \]
        


     The map $\varphi_i$ preserves the inclusion property between two intervals of indexes of each two generators, and the next claim follows. 
    \begin{claim}\label{Phi_property} For positive integers $1 \leq i,\ j,\  k,\  l,\  q,\  r \leq n$, the following hold. 
\begin{itemize}
\item  If $[i,j]_c$ contains $[k,l]_c$ and $[k,l]_c$ contains $[q,r]_c$, 
then $[\overline{i-i+1}, \overline{j-i+1}]$ contains $[\overline{k-i+1}, \overline{l-i+1}]$ and $[\overline{k-i+1}, \overline{l-i+1}]$ contains $[\overline{q-i+1}, \overline{r-i+1}]$. 

\item  If $[i,j]_c$ contains $[k,l]_c$, $[i,j]_c$ and $[q,r]_c$ are disjoint, then $[\overline{i-i+1}, \overline{j-i+1}]$ contains $[\overline{k-i+1}, \overline{l-i+1}]$, and $[\overline{i-i+1}, \overline{j-i+1}]$ and $[\overline{q-i+1}, \overline{r-i+1}]$ are disjoint. 

\item  If $[i,j]_c$ contains $[k,l]_c$ and $[q,r]_c$, $[k,l]_c$ and $[q,r]_c$ are disjoint, then $[\overline{i-i+1}, \overline{j-i+1}]$ contains $[\overline{k-i+1}, \overline{l-i+1}]$ and $[\overline{q-i+1}, \overline{r-i+1}]$, and $[\overline{k-i+1}, \overline{l-i+1}]$ and $[\overline{q-i+1}, \overline{r-i+1}]$ are disjoint. 

\item  If $[i,j]_c$, $[k,l]_c$, and $[q,r]_c$ are disjoint from each other, then $[\overline{i-i+1}, \overline{j-i+1}]$, $[\overline{k-i+1}, \overline{l-i+1}]$, and $[\overline{q-i+1}, \overline{r-i+1}]$ are disjoint from each other. 
\end{itemize}

    \end{claim}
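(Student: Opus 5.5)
The plan is to view the map $z\mapsto \overline{z-i+1}$ as a rotation of the cyclically ordered set $\{1,\dots,n\}$. This rotation sends $i$ to $1$, and every cyclic interval $[u,v]_c$ is sent to the cyclic interval $[\overline{u-i+1},\overline{v-i+1}]_c$. Concretely, let $\rho(z)=\overline{z-i+1}$. Since $\rho$ is a bijection of $[1,n]$ commuting with the successor map $z\mapsto \overline{z+1}$, it carries the sequence $u,\overline{u+1},\dots,v$ onto the sequence $\rho(u),\overline{\rho(u)+1},\dots,\rho(v)$. Hence $\rho([u,v]_c)=[\rho(u),\rho(v)]_c$ as sets. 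Because $\rho$ is a bijection, it automatically preserves both containment and disjointness of such sets. So all four bullets reduce to one remaining point: the rotated cyclic intervals must be honest (non-wrapping) intervals of $J_n$, that is, $\rho(u)<\rho(v)$ for each generator $\sigma_{u,v}$ in the triple.

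To verify that point I will use the fact that in every one of the four configurations, each of the three cyclic intervals either equals $[i,j]_c$, is contained in $[i,j]_c$, or is disjoint from $[i,j]_c$. Moreover, $\rho([i,j]_c)=[1,\rho(j)]_c$, which starts at $1$ and therefore does not wrap; it is the ordinary interval $[1,\rho(j)]$. For an interval $[u,v]_c\subset[i,j]_c$, its image is a consecutive run inside $\{1,\dots,\rho(j)\}$ starting at $\rho(u)$ and ending at $\rho(v)$. If it wrapped, it would contain both $n$ and $1$. Containing $n$ forces $\rho(j)=n$, so $[i,j]_c$ is everything, which is impossible when $i\neq j$. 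Hence $\rho(u)<\rho(v)$.

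For an interval $[q,r]_c$ disjoint from $[i,j]_c$, its image avoids $1$, so it cannot wrap past $n$ to $1$. Hence $\rho(q)<\rho(r)$ as well. In the first bullet, $[q,r]_c\subset[k,l]_c\subset[i,j]_c$, so the containment case covers it. In the fourth bullet, disjointness from $[i,j]_c$ covers $[k,l]_c$ and $[q,r]_c$.

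Finally, I will write out each bullet explicitly. Containment $[k,l]_c\subset[i,j]_c$ passes to $[\rho(k),\rho(l)]\subset[\rho(i),\rho(j)]=[1,\rho(j)]$, and disjointness passes similarly. Here $[\cdot,\cdot]$ denotes the ordinary integer intervals used in the definition of $J_n$. Some care is needed in the third bullet: containment in $[i,j]_c$ and mutual disjointness of the two smaller intervals must be transported simultaneously, which is immediate from injectivity of $\rho$.

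The only genuine obstacle is the non-wrapping check above. The rest is the observation that a bijective rotation preserves set-theoretic relations. The subtle point is ruling out that a contained interval straddles $n\to1$ after rotation, which rests on $[i,j]_c\neq[1,n]$.
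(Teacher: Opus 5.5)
Your rotation idea is sound, but the step you single out as the crux is false. You argue that a wrapping image would force $\rho(j)=n$, ``so $[i,j]_c$ is everything, which is impossible when $i\neq j$.'' In fact $[i,\overline{i-1}]_c$ is all of $\{1,\dots,n\}$ for every $i$. For example $[1,n]_c$ is the whole set, and for $n=3$ the generators $\sigma_{1,3},\sigma_{2,1},\sigma_{3,2}$ all have full support. In exactly this case, reading ``contains'' as set inclusion makes the claim itself fail. Take $i=1$, $j=n$, $(k,l)=(n,1)$. Then $\{n,1\}\subset[1,n]_c$ as sets and $\rho$ is the identity. So $(\overline{k-i+1},\overline{l-i+1})=(n,1)$ is not an interval of $J_n$, and $\varphi_1(\sigma_{n,1})$ is not even a generator. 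The non-wrapping check therefore cannot be obtained from set inclusion plus $i\neq j$.

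The repair is to use the notion of containment that the presentation actually encodes. Read $[k,l]_c\subset[i,j]_c$ as saying that the sequence $[k,l]_c$ is a consecutive block of the sequence $[i,j]_c$. In $AJ_3$, for instance, $\sigma_{1,3}$ is related to $\sigma_{1,2}$ and $\sigma_{2,3}$ but not to $\sigma_{3,1}$. The paper's list of admissible orderings $i<k<l<j$, $j<i<k<l$, $l<j<i<k$, $k<l<j<i$ expresses the same notion. Since $\rho$ commutes with the successor map, it sends the sequence $[i,j]_c$ to the increasing run $1,2,\dots,\rho(j)$, where $\rho(j)=|[i,j]_c|\le n$. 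Any consecutive block of that run has the form $\rho(u),\rho(u)+1,\dots,\rho(v)$ with $\rho(u)<\rho(v)$. This gives non-wrapping without appealing to $[i,j]_c\neq\{1,\dots,n\}$. Rotation also preserves the consecutive-block relation, so your transport argument for containment still goes through. Your disjointness argument (the image avoids $1$) is correct as stated.

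With this correction, your approach is a genuinely cleaner alternative to the paper's proof. The paper checks the cyclic orderings of four distinct endpoints by explicit computation. Your argument also handles shared endpoints and the pair $[k,l]_c,[q,r]_c$ uniformly, whereas the paper's reduction to ``the inclusion relationship between $[i,j]_c$ and $[k,l]_c$'' leaves both implicit.
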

    
\begin{proof}
    Since the calculation results depend only on the inclusion relationship between two intervals, it suffices to consider the inclusion relationship between $[i.j]_c$ and $[k,l]_c$. 
    For positive integers $i$, $j$, $k$, and $l$, we first assume that $[i,j]_c$ contains $[k,l]_c$. Then the four possible orderings of $i$, $j$, $k$, and $l$ are $i<k<l<j$ or $j<i<k<l$ or $l<j<i<k$ or $k<l<j<i$. 
    By elementary calculation, we can confirm that $[1,\overline{l-i+1}]$ contains $[\overline{k-i+1},\overline{l-i+1}]$ for each case. Here, we present a proof for the most complicated case that $i$, $j$, $k$, and $l$ satisfy $l<j<i<k$. Actually we obtain the following. 
\[
\begin{array}{l}
    \overline{i-i+1} = 1, \\
    \overline{j-i+1} = j-i+n+1, \\
    \overline{k-i+1} = k-i+1, \\
    \overline{l-i+1} = l-i+n+1. 
\end{array}
\]

    Since $1\leq l < k \leq n$, we see that $k-l<n$. This concludes that $k-i+1 < l-i+n+1$. 
    Therefore, $[1,\overline{l-i+1}]$ contains $[\overline{k-i+1},\overline{l-i+1}]$ in this case. The other cases can be treated in the same way. 
    We secondary assume that $[i,j]_c$ and $[k,l]_c$ are disjoint. 
    Then the four possible orderings of $i$, $j$, $k$, and $l$ are $i<j<k<l$ or $l<i<j<k$ or $k<l<i<j$ or $j<k<l<i$. 
    By an elementary calculation, we can see that $[\overline{1}, \overline{j-i+1}]$ and $[\overline{k-i+1}, \overline{l-i+1}]$ are disjoint for the case $i<j<k<l$. 
    The other cases can be treated in the same way. 
\end{proof}

It concludes that 
if each pair of $\sigma_{i,j}$, $\sigma_{k,l}$, and $\sigma_{q,r}$ make a relation, then each two generators of $\varphi(\sigma_{i,j})$, $\varphi(\sigma_{k,l})$, and $\varphi(\sigma_{q,r})$ involve a relation, i.e., if the vertices $\sigma_{i,j}$, $\sigma_{k,l}$, and $\sigma_{q,r}$ span a cycle of squares, then also $\varphi(\sigma_{i,j})$, $\varphi(\sigma_{k,l})$, and $\varphi(\sigma_{q,r})$ do. 

Let $\psi_{i}$ be a correspondence from the image $\text{Im}\varphi_i = \{ s_{1, \overline{j-i+1}}, s_{\overline{k-i+1},\overline{l-i+1}}, s_{\overline{q-i+1},\overline{r-i+1} } \}$ of $\varphi_{i}$ to a subset of the standard generators of $AJ_n$ given by the following: 
    \[
    s_{u',v'} \longmapsto \sigma_{\overline{u'+i-1}, \overline{v'+i-1}}. 
    \]

Similarly as $\varphi_i$, the map $\psi_i$ also preserves the inclusion property between two intervals of indexes of each two generators, and the next claim follows. 
\begin{claim}\label{Psy_property}
 For positive integers $1 \leq i,\ j,\ k,\ l,\ q,\ r \leq n$ let $i'$, $j'$, $k'$, $l'$, $q'$, $r'$ be such the following. 

\[
\begin{array}{cc}
   i'  & = \overline{i-i+1} \\
   j'  & = \overline{j-i+1} \\
   k'  & = \overline{k-i+1} \\
   l'  & = \overline{l-i+1} \\
   q'  & = \overline{q-i+1} \\
   r'  & = \overline{r-i+1} 
\end{array}
\]
For these integers, the following holds. 
 \begin{itemize}
     \item If $[i',j']$ contains $[k',l']$, and $[k',l']$ contains $[q',r']$, 
     then $[\overline{i'+i-1},\overline{j'+i-1}]_c$ contains $[\overline{k'+i-1},\overline{l'+i-1}]_c$, and then $[\overline{k'+i-1},\overline{l'+i-1}]_c$ contains $[\overline{q'+i-1},\overline{r'+i-1}]_c$. 
     
     \item If $[i',j']$ contains $[k',l']$, and $[k',l']$ and $[q',r']$ are disjoint, 
     then $[\overline{u+i-1},\overline{v+i-1}]_c$ contains $[\overline{k'+i-1},\overline{l'+i-1}]_c$, and then $[\overline{k'+i-1}, \overline{l' +i -1}]_c$ and $[\overline{q'+i-1},\overline{r'+i-1}]_c$ are disjoint. 
    
     \item If $[i',j']$ contains $[k',l']$ and $[q',r']$, and $[u, v]$ and $[q',r']$ are disjoint, 
     then $[\overline{i'+i-1},\overline{j'+i-1}]_c$ contains $[\overline{k'+i-1}, \overline{l'+i-1}]_c$ and $[\overline{q'+i-1}, \overline{r'+i-1}]_c$, and then $[\overline{k'+i-1}, \overline{l'+i-1}]_c$ and $[\overline{q'+i-1}, \overline{r'+i-1}]_c$ are disjoint. 

    \item If $[i',j']$, $[k',l']$, and $[q',r']$ are disjoint from each other, then $[\overline{i'+i-1},\overline{j'+i-1}]_c$, $[\overline{k'+i-1},\overline{l'+i-1}]_c$ and $[\overline{q'+i-1}, \overline{r'+i-1}]_c$ are disjoint from each other. 
 \end{itemize}
 
\end{claim}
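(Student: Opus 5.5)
The plan is to reduce Claim~\ref{Psy_property} to a single statement about one pair of intervals, exactly as in the proof of Claim~\ref{Phi_property}. Each of the four bullets is a conjunction of pairwise conditions, each either ``contains'' or ``is disjoint from''. So it suffices to show the following for two ordinary intervals $[a',b']$ and $[c',d']$ in $[1,n]$. If $[a',b']\supset[c',d']$, then
\[
[\overline{a'+i-1},\overline{b'+i-1}]_c\supset[\overline{c'+i-1},\overline{d'+i-1}]_c,
\]
and if they are disjoint, then so are the shifted cyclic intervals.

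The key observation is that the map $x\mapsto\overline{x+i-1}$ is a rotation $\rho$ of the cyclic set $\mathbb{Z}/n\mathbb{Z}\cong[1,n]$. I would first check the identity
\[
[\overline{u'+i-1},\overline{v'+i-1}]_c=\rho([u',v'])
\]
for $u'<v'$. By definition, $[p,q]_c$ is the set of residues met when walking from $p$ to $q$ in the increasing cyclic direction. For $u'<v'$, the ordinary interval $[u',v']$ is the walk from $u'$ to $v'$ in the same direction. A rotation preserves cyclic successors, so $\rho$ carries this walk onto the walk from $\rho(u')$ to $\rho(v')$. Concretely, I would split into two cases:
\begin{itemize}
\item If $v'+i-1\le n$, no wrap occurs and the image is the ordinary interval $[u'+i-1,v'+i-1]$.
\item If $v'+i-1>n$, then $\overline{v'+i-1}=v'+i-1-n$, which is less than $\overline{u'+i-1}$ when $u'+i-1\le n$, or the wrap occurs entirely. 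The image is then the wrapped set $\{\overline{u'+i-1},\dots,n,1,\dots,\overline{v'+i-1}\}$, which is precisely $[\cdot,\cdot]_c$ in the $q<p$ case.
\end{itemize}
One subcase needs care: when both endpoints wrap, the image is again an ordinary interval with $p<q$.

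Once this identity holds, the claim is immediate. Since $\rho$ is a bijection, it preserves both inclusion and disjointness of sets. Applying it to $[i',j']$, $[k',l']$ and $[q',r']$ gives all four bullets.

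The main obstacle is bookkeeping. One must verify that $\overline{u'+i-1}\ne\overline{v'+i-1}$, which holds because $1\le v'-u'<n$, so the cyclic interval is defined. One must also verify that the endpoint order is correct in each wrap case, since the notation $[p,q]_c$ is ordered and $[q,p]_c$ would give the complementary arc. As in Claim~\ref{Phi_property}, I would present the case $u'+i-1\le n<v'+i-1$ in detail and note that the others are analogous. Finally, I would observe that $i'=1$, so the intervals live in $[1,n]$, and that the typos $[u,v]$ in the statement are meant as $[i',j']$ and $[k',l']$.
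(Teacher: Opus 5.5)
Your argument is correct, but it takes a different route from the paper. Both proofs reduce to a single pair of intervals. The paper then goes back to the original indices. It notes that $i'<k'<l'<j'$ is compatible with only four of the $24$ orderings of $i,j,k,l$, namely the cyclic rotations of $i<k<l<j$. It computes in each case that $\overline{x'+i-1}=x$, so $\psi_i$ undoes $\varphi_i$, and reads the containment off the cyclic order of $i,j,k,l$; disjointness is handled ``similarly''. You never return to $i,j,k,l$. Instead you recognize $x\mapsto\overline{x+i-1}$ as a rotation $\rho$ of $\mathbb{Z}/n\mathbb{Z}$, and show by a three-case wrap analysis that $\rho$ maps any ordinary interval onto the cyclic interval between the rotated endpoints.

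Your route has three advantages. It is more general, since it does not depend on how the primed integers arose. It replaces the ordering enumeration by a uniform check. It also covers shared endpoints such as $k'=i'=1$, which the paper's strict orderings do not literally include. In return, the paper's computation exhibits $\psi_i\circ\varphi_i=\mathrm{id}$ explicitly, and this is what is used afterwards to identify the cube vertex as $\sigma_{i,j}\sigma_{k,l}\sigma_{q,r}$. In your language this is simply the fact that $\varphi_i$ and $\psi_i$ are inverse rotations.

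One point to tighten. The containment that matters for the relations of $AJ_n$ is containment of arcs, not of sets. For example, in $AJ_3$ we have $[3,1]_c\subset[1,3]_c$ as sets, yet $\sigma_{1,3}$ and $\sigma_{3,1}$ satisfy no relation. When $j'=n$, the image of $[i',j']$ is the whole circle, so ``a bijection preserves inclusion of sets'' says nothing there. Your walk formulation already gives what is needed: $\rho$ sends sub-walks of $1\to2\to\cdots\to j'$ to sub-walks of the rotated walk. State the final step in those terms.
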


\begin{proof}
    Since the calculation result depends only on the inclusion relationship between any two intervals, it suffices to consider the inclusion relationship between $[i',j']$ and $[k', l']$. 
    Suppose that $[i', j']$ contains $[k', l']$, i.e. , $i'<k'<l'<j'$. 
    Consider the twenty four orderings of four integers $i$, $j$, $k$, $l$. Only four orderings can be consistently derived from $i'<k'<l'<j'$: 
    \[
    \begin{array}{cccc}
       i < k < l < j,  & j < i < k < l, & l < j < i < k, & k < l < j < i. 
    \end{array}
    \]
We demonstrate two orderings, namely, the most complicated ones. 
If $k < l < j < i$, the following holds. 
\[
\begin{array}{cl}
   i'  & = \overline{i-i+1} = 1 \\
   j'  & = \overline{j-i+1} = j - i + n + 1 \\
   k'  & = \overline{k-i+1} = k - i + n + 1 \\
   l'  & = \overline{l-i+1} = l - i + n + 1 
\end{array}
\]
\[
\begin{array}{cl}
    \overline{i' +i -1} &= \overline{1 +i -1}  \\
                        &= \overline{i} = i   \\
    \overline{j' +i -1} &= \overline{(j - i + n + 1) +i -1} \\
                        &= \overline{j + n} = j \\
    \overline{k' +i -1} &= \overline{(k - i + n + 1) +i -1} \\
                        &= \overline{k + n} = k \\
    \overline{l' +i -1} &= \overline{ (l - i + n + 1) +i -1} \\
                        &= \overline{l+n} = l
\end{array}
\]
    This clearly concludes that if $[i',j']$ contains $[k', l']$, then $[\overline{i'+i-1},\overline{j'+i-1}]_c$ contains $[\overline{k'+i-1},\overline{l'+i-1}]_c$ for $k < l < j < i$. 
In the same way, we can see that if $[i',j']$ contains $[k', l']$, then $[\overline{i'+i-1},\overline{j'+i-1}]_c$ contains $[\overline{k'+i-1},\overline{l'+i-1}]_c$ for the other orderings.  
    Similarly, we can see that if $[i', j']$ and $[k', l']$ are disjoint, $[i, j]_c$ and $[k, l]_c$ are disjoint. 
\end{proof}

By Claim~\ref{Phi_property}, the three vertices $\sigma_{i,j}$, $\sigma_{k,l}$, and $\sigma_{q,r}$ that span the cycle of three squares around $e$ in $C(AJ_n)$ are corresponded to 
the three vertices $s_{1, \overline{j-i+1}}$, $s_{\overline{k-i+1},\overline{l-i+1}}$, $s_{\overline{q-i+1},\overline{r-i+1}}$ that span a cycle of three squares around $e$ in $C(J_n)$ by $\varphi_{i}$. 
In the proof of \cite{genevois2022cactusgroupsviewpointgeometric}, it is shown that for all the cycles of squares in $C(J_n)$, there exists a unique vertex, say $x$, which spans a cube with the cycle of squares. Here, the normal form of the vertex $x$ consists of the three vertices (generators) that span a cycle of three square, and that is $s_{1, \overline{j-i+1}}, s_{\overline{k-i+1},\overline{l-i+1}}, s_{\overline{q-i+1},\overline{r-i+1} }$. 
By Claim~\ref{Psy_property}, since $\psi_i$ preserves the inclusion property, 
\[
\psi_i ( s_{1, \overline{j-i+1}}) \psi_i (s_{\overline{k-i+1},\overline{l-i+1}}) \psi_i(s_{\overline{q-i+1},\overline{r-i+1}}) =\sigma_{i,j} \sigma_{k,l} \sigma_{q,r}
\]
is the vertex that spans a cube with the cycle of three squares in $C(AJ_n)$. 
Thus $C(AJ_n)$ satisfies the condition (4). 
Consequently, $C(AJ_n)$ is CAT$(0)$, thus $AJ_n$ is a CAT$(0)$ group. 
\end{proof}







\section{Proof of Theorem~\ref{AJ_hyp}}
In this section, we prepare some terminology, propositions, and lemmas which are essential to show Theorem~\ref{AJ_hyp}. 

A \textit{Gromov hyperbolic space} is defined by a geodesic space that satisfies the following. 
There exists a positive real number $\delta$ such that every geodesic triangle $(\gamma_0: [0,L_1] \to X, \gamma_1: [0,L_2] \to X, \gamma_2: [0,L_3] \to X)$ in $X$ satisfies the following condition: 
\[
\begin{array}{lll}
    \gamma_0  &\subset& B_{\delta} (\gamma_1 \cup \gamma_2)  \\
    \gamma_1  &\subset& B_{\delta} (\gamma_0 \cup \gamma_2)  \\
    \gamma_2  &\subset& B_{\delta} (\gamma_0 \cup \gamma_1). 
\end{array}
\]
Here, $B_{\delta}$ is the $\delta$-neighborhood in the metric space. 
\begin{prop}\label{QIandHyp}\cite[Corollary~7.2.13,3]{Geom_gr_th}
    Let $X$ and $Y$ be a geodesic space. 
    Suppose that $X$ and $Y$ are quasi-isometric. Then $X$ is Gromov hyperbolic if and only if $Y$ is Gromov hyperbolic. 
\end{prop}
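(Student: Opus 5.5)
The plan is to show that hyperbolicity transfers along a quasi-isometry in one direction; the converse then follows by symmetry, since a quasi-inverse of a quasi-isometry is again a quasi-isometry. So let $f\colon X\to Y$ be a $(\lambda,c)$-quasi-isometry, meaning
\[
\lambda^{-1}d(x,x')-c\le d(f(x),f(x'))\le \lambda d(x,x')+c ,
\]
and $f$ has $c$-dense image. Assume $Y$ is $\delta$-hyperbolic in the sense above. We want a constant $\delta'$, depending only on $\lambda$, $c$ and $\delta$, such that every geodesic triangle in $X$ is $\delta'$-thin.

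The key tool is the stability of quasi-geodesics (the Morse lemma). If $Y$ is $\delta$-hyperbolic and $\alpha$ is a $(\lambda,c)$-quasi-geodesic in $Y$, then $\alpha$ lies within Hausdorff distance $R=R(\lambda,c,\delta)$ of any geodesic joining its endpoints. I would prove this in three steps.

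\emph{Taming.} First replace $\alpha$ by a continuous, piecewise geodesic path $\alpha'$. To do this, sample $\alpha$ at integer parameters and join consecutive sample points by geodesics. Then $\alpha'$ is uniformly close to $\alpha$, is again a quasi-geodesic with controlled constants, and has length comparable to the distance between any two of its points.

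\emph{Divergence.} Next use the standard fact that in a $\delta$-hyperbolic space, a rectifiable path of length $L$ joining the endpoints of a geodesic $[p,q]$ passes within $\delta\log_2 L+1$ of every point of $[p,q]$. This is proved by repeatedly bisecting the path and using thin triangles at each stage.

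\emph{Distance estimate.} Let $D$ be the maximal distance from $[p,q]$ to $\alpha'$. Take a point realizing $D$ and run the divergence estimate on a detour of $\alpha'$ that avoids a ball of radius $D$ around it. This gives an inequality of the form $D\le \delta\log_2(aD+b)+1$, with $a,b$ depending on $\lambda,c$. Hence $D$ is bounded. The reverse inclusion, that $\alpha'$ stays near $[p,q]$, follows by a connectedness argument applied to the subarcs of $\alpha'$ lying outside the $D$-neighbourhood of $[p,q]$.

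With the Morse lemma in hand, the main argument is direct. Take a geodesic triangle $(\gamma_0,\gamma_1,\gamma_2)$ in $X$. Each image $f\circ\gamma_j$ is a $(\lambda,c)$-quasi-geodesic in $Y$. Choose geodesics $\beta_j$ in $Y$ joining the images of the vertices, so that $(\beta_0,\beta_1,\beta_2)$ is a geodesic triangle in $Y$ and is therefore $\delta$-thin.

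Now fix a point $\gamma_0(s)$. By the Morse lemma, $f(\gamma_0(s))$ is within $R$ of a point of $\beta_0$. By thinness, that point is within $\delta$ of some $\beta_k$ with $k\in\{1,2\}$. By the Morse lemma again, that point is within $R$ of some $f(\gamma_k(t))$. So $d(f(\gamma_0(s)),f(\gamma_k(t)))\le 2R+\delta$. The lower quasi-isometry bound then gives
\[
d(\gamma_0(s),\gamma_k(t))\le \lambda(2R+\delta+c).
\]
Taking $\delta'=\lambda(2R+\delta+c)$ shows that $X$ is $\delta'$-hyperbolic, and the converse direction uses the quasi-inverse as explained at the start.

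The main obstacle is the Morse lemma, and specifically the logarithmic divergence estimate together with the bootstrapping inequality for $D$. Everything else is bookkeeping with the constants $\lambda$, $c$ and $\delta$.
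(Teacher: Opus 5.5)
Your argument is correct: it is the standard proof by stability of quasi-geodesics. You push a geodesic triangle forward to a quasi-geodesic triangle, compare it with a geodesic triangle in $Y$ via the Morse lemma, use thinness there, and pull back with the lower quasi-isometry bound; the converse follows from the quasi-inverse. The paper gives no proof of its own and only cites L\"oh's book, where the result is likewise deduced from stability of quasi-geodesics, so your route agrees with the cited one.
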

This proposition means that Gromov hyperbolicity is invariant under quasi-isometry. 

A group $G$ is called a \textit{hyperbolic group} if there exists a generating set $S$ such that the Cayley graph Cay$(G,S)$ is a Gromov hyperbolic space. See \cite{Geom_gr_th} for details on the hyperbolic space and the hyperbolic group. 

\begin{lemma}\label{CayAJ_3_Hyp_plane}
The Cayley graph Cay$(AJ_3, S)$ of $AJ_3$ is quasi-isometric to the hyperbolic plane $\mathbb{H}^2$. 
\end{lemma}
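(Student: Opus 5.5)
The plan is to show that the Cayley complex $C(AJ_3)$ is homeomorphic to the plane. Concretely, it should be the square tiling of type $\{4,6\}$, with six squares meeting at every vertex. I would then give it a hyperbolic metric on which $AJ_3$ acts geometrically and apply the Švarc--Milnor lemma. By Lemma~\ref{AJcube} and the proof of Theorem~\ref{AJ_CAT0}, $C(AJ_3)$ is a simply connected square complex in which squares are embedded and two distinct squares never share two consecutive edges. So the structure of $C(AJ_3)$ is governed by the links of its vertices, and by the left $AJ_3$-action it suffices to compute the link at $e$.

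\textbf{Step 1: compute the link $\mathrm{Lk}(e)$.} Its vertices are the six edges at $e$, labelled by the six generators $\sigma_{p,q}$; since all generators are involutions, each generator gives exactly one edge at $e$. Its edges come from the corners at $e$ of squares. A square with corner at $e$ whose two edges at $e$ are labelled $a,b$ corresponds to a relation of the form $ab'=ba'$. Reading off the presented relations of $AJ_3$, the pairs of generators that occur together in a relation are exactly
\[
\{\sigma_{1,2},\sigma_{1,3}\},\ \{\sigma_{1,3},\sigma_{2,3}\},\ \{\sigma_{2,3},\sigma_{2,1}\},\ \{\sigma_{2,1},\sigma_{3,1}\},\ \{\sigma_{3,1},\sigma_{3,2}\},\ \{\sigma_{3,2},\sigma_{1,2}\}.
\]
Condition (3) of Proposition~\ref{Median_properties}, verified in the proof of Theorem~\ref{AJ_CAT0}, shows that each such pair spans exactly one square at $e$. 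Hence $\mathrm{Lk}(e)$ is a $6$-cycle, namely
\[
\sigma_{1,2}\!-\!\sigma_{1,3}\!-\!\sigma_{2,3}\!-\!\sigma_{2,1}\!-\!\sigma_{3,1}\!-\!\sigma_{3,2}\!-\!\sigma_{1,2}.
\]

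This is the step I expect to be the main obstacle. One has to check carefully that every corner of every relator square lands on one of these six link edges, with no repetitions. Each relator square has four corners, and translating it by the group shows that its corners at $e$ correspond to the pairs $(a,b)$ obtained by cyclically reading the relator word. I would list these corners explicitly for the six relators and check that the $24$ corners fill each of the six link edges exactly once per square orbit. In particular this shows there are no triangles in the link, so no $3$-cubes arise and condition (4) holds vacuously.

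\textbf{Step 2: identify $C(AJ_3)$ with the hyperbolic plane.} Since every vertex link is a circle, $C(AJ_3)$ is a surface without boundary. Being simply connected, it is homeomorphic to $\mathbb{R}^2$, tiled by squares with six at each vertex. I would metrize each square as a regular hyperbolic quadrilateral with interior angles $\pi/3$, which is possible since $4\cdot\pi/3<2\pi$. The angle sum at every vertex is then $6\cdot \pi/3=2\pi$, so the metric is locally isometric to $\mathbb{H}^2$, and by completeness and simple connectivity it is globally isometric to $\mathbb{H}^2$ (the regular tiling $\{4,6\}$). The left action of $AJ_3$ preserves this metric, because it permutes squares and edge labels.

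\textbf{Step 3: conclude via Švarc--Milnor.} The action of $AJ_3$ on this copy of $\mathbb{H}^2$ is free on vertices, and it is properly discontinuous and cocompact, with a fundamental domain contained in finitely many squares. The Švarc--Milnor lemma then shows that $\mathrm{Cay}(AJ_3,S)$, being quasi-isometric to $AJ_3$, is quasi-isometric to $\mathbb{H}^2$. Together with Proposition~\ref{QIandHyp}, this will give Theorem~\ref{AJ_hyp}.
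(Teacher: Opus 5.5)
Your proposal is correct. Its first half matches the paper. Your link $\sigma_{1,2}-\sigma_{1,3}-\sigma_{2,3}-\sigma_{2,1}-\sigma_{3,1}-\sigma_{3,2}-\sigma_{1,2}$ is exactly the paper's list of six $4$-cycles through $e$. Both arguments then realize $C(AJ_3)$ as the $\{4,6\}$ tiling by regular quadrilaterals with angles $\pi/3$.

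The routes part at the last step. The paper compares a combinatorial geodesic $\beta$ with the hyperbolic geodesic $\alpha$ between the same endpoints. It then asserts, via an unspecified ``direct hyperbolic geometrical calculation'', that $f$ is a quasi-isometric embedding. You instead note that the left action is isometric, proper and cocompact, and apply the \v{S}varc--Milnor lemma, which needs no geodesic estimates at all. The paper's route, if its calculation were carried out, would give explicit constants; yours is shorter and complete.

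Your version also makes explicit two things the paper uses silently. First, the link at each vertex is a single circle, so the complex is a surface. Second, simple connectivity is what forces this locally $\{4,6\}$ surface to be the tiling itself rather than a quotient of it.

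One ordering point: a simply connected surface could a priori be $S^2$. So deduce $C(AJ_3)\cong\mathbb{R}^2$ only after the complete hyperbolic metric is in place. Alternatively, note that a finite complex would have $\chi=V-3V+\tfrac{3}{2}V=-V/2<0$.

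A correction to your bookkeeping in Step 1. The six relators form only three cyclic classes; for instance, $\sigma_{2,3}\sigma_{1,3}\sigma_{1,2}\sigma_{1,3}$ is a cyclic shift of $\sigma_{1,2}\sigma_{1,3}\sigma_{2,3}\sigma_{1,3}$. Moreover, each square has a stabilizer of order two. For example, $\sigma_{1,2}$ maps $[e,\sigma_{1,2},\sigma_{1,2}\sigma_{1,3},\sigma_{1,3}]$ to itself, swapping $e\leftrightarrow\sigma_{1,2}$ and $\sigma_{1,3}\leftrightarrow\sigma_{1,2}\sigma_{1,3}$. So there are three orbits of squares, each contributing two corners at $e$. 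That gives six corners in all, one per link edge, not $24$. If $24$ corners really landed at $e$, each link edge would be covered four times, contradicting condition (3). Your actual argument (condition (3) plus the list of pairs) is unaffected.

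Relatedly, each generator $\sigma_{p,q}$ swaps $e$ and $\sigma_{p,q}$, so it fixes the midpoint of that edge, and the action on $\mathbb{H}^2$ is not free. You only claim freeness on vertices, and \v{S}varc--Milnor needs only properness, so this is harmless.
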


\begin{proof}


The vertices in Cay$(AJ_3,S)$ whose word length is $1$ are the generators; 
    \[
    \begin{array}{cccccc}
        \sigma_{1,2} &
        \sigma_{2,3} &
        \sigma_{3,1} &
        \sigma_{1,3} &
        \sigma_{2,1} &
        \sigma_{3,2}. 
    \end{array}
    \]
The vertices listed below are the elements of word length two with respect to the generators mentioned above. This list is obtained by enumerating all the words of $AJ_3$ whose word length is $2$, and observing normal forms of the elements of $AJ_3$ presented by them. 
    \[
    \begin{array}{cccccc}
        \sigma_{1,2}\sigma_{3,1} & \sigma_{1,2}\sigma_{2,1} & \sigma_{1,2}\sigma_{2,3} & \sigma_{1,3}\sigma_{2,3} & \sigma_{1,3}\sigma_{2,1} & \sigma_{1,3}\sigma_{3,1} \\
        \sigma_{1,3}\sigma_{3,2} & \sigma_{1,3}\sigma_{1,2} & \sigma_{2,3}\sigma_{1,2} & \sigma_{2,3}\sigma_{3,2} & \sigma_{2,3}\sigma_{3,1} & \sigma_{2,1}\sigma_{3,1} \\
        \sigma_{2,1}\sigma_{3,2} & \sigma_{2,1}\sigma_{1,2} & \sigma_{2,1}\sigma_{1,3} &  \sigma_{2,1}\sigma_{2,3} & \sigma_{3,1}\sigma_{2,3} & \sigma_{3,1}\sigma_{1,3} \\
        \sigma_{3,1}\sigma_{1,2} & \sigma_{3,2}\sigma_{1,2} & \sigma_{3,2}\sigma_{1,3} & \sigma_{3,2}\sigma_{2,3} & \sigma_{3,2}\sigma_{2,1} & \sigma_{3,2}\sigma_{3,1} 
    \end{array}
    \]

As shown in Figure~\ref{length_2}, there are edges connecting the above vertices in Cay$(AJ_3,S)$, and the $4$-cycles containing $e$ in Cay$(AJ_3,S)$ are exactly 
\[
\begin{array}{l}
\langle e, \sigma_{1,2}, \sigma_{1,2}\sigma_{1,3}, \sigma_{1,3}, e \rangle, 
\langle e, \sigma_{1,3}, \sigma_{1,3}\sigma_{1,2}, \sigma_{2,3}, e \rangle, 
\langle e, \sigma_{2,3}, \sigma_{2,1}\sigma_{3,1}, \sigma_{2,1}, e \rangle, \\
\langle e, \sigma_{2,1}, \sigma_{2,1}\sigma_{2,3}, \sigma_{3,1}, e \rangle, 
\langle e, \sigma_{3,1}, \sigma_{3,2}\sigma_{1,2}, \sigma_{3,2}, e \rangle, 
\langle e, \sigma_{3,1}, \sigma_{3,2}\sigma_{3,1}, \sigma_{1,2}, e \rangle. 
\end{array}
\]

\begin{figure}[htb]
\label{length_2}
    \centering 

\begin{tikzpicture}
    \def\side{2}; 
    \def\lineLen{1.5}; 

    \coordinate (O) at (0,0); 
    \node[below right, xshift=1mm, yshift=-1mm] at (O) {$e$}; 

    \begin{scope}[rotate=0]
        \coordinate (A) at (0, 0); 
        \coordinate (B) at (\side, 0); 
        \coordinate (C) at (\side * cos{60} + \side, \side * sin{60});  
        \coordinate (D) at (\side * cos{60}, \side * sin{60}); 
        \draw (A) -- (B) -- (C) -- (D) -- cycle; 
        \node[right, xshift=1mm] at (B) {$\sigma_{1,2}$}; 
        \node[above right, xshift=1mm, yshift=1mm] at (C) {$\sigma_{1,3}\sigma_{2,3}$}; 
        \coordinate (end1) at ($ (B) + (0:\lineLen) $);
        \coordinate (end2) at ($ (B) + (30:\lineLen) $);
        \coordinate (end3) at ($ (B) + (-30:\lineLen) $);
        \draw (B) -- (end1);
        \draw (B) -- (end2);
        \draw (B) -- (end3);
        \node[right, xshift=1mm] at (end1) {$\sigma_{1,2}\sigma_{2,1}$}; 
        \node[above right, xshift=1mm, yshift=1mm] at (end2) {$\sigma_{1,2}\sigma_{2,3}$}; 
        \node[below right, xshift=1mm, yshift=-1mm] at (end3) {$\sigma_{1,2}\sigma_{3,1}$}; 
    \end{scope}

    \begin{scope}[rotate=60]
        \coordinate (A) at (0, 0);
        \coordinate (B) at (\side, 0);
        \coordinate (C) at (\side * cos{60} + \side, \side * sin{60});
        \coordinate (D) at (\side * cos{60}, \side * sin{60});
        \draw (A) -- (B) -- (C) -- (D) -- cycle;
        \node[above right, xshift=1mm, yshift=1mm] at (B) {$\sigma_{1,3}$}; 
        \node[above, yshift=1.4mm] at (C) {$\sigma_{1,3}\sigma_{1,2}$};
        \coordinate (end1) at ($ (B) + (0:\lineLen) $); 
        \coordinate (end2) at ($ (B) + (30:\lineLen) $); 
        \coordinate (end3) at ($ (B) + (-30:\lineLen) $); 
        \draw (B) -- (end1);
        \draw (B) -- (end2);
        \draw (B) -- (end3);
        \node[above right, xshift=1mm, yshift=1mm] at (end1) {$\sigma_{1,3}\sigma_{3,1}$}; 
        \node[above, yshift=1mm] at (end2) {$\sigma_{1,3}\sigma_{3,2}$}; 
        \node[right, xshift=1mm] at (end3) {$\sigma_{1,3}\sigma_{2,1}$}; 
    \end{scope}

    \begin{scope}[rotate=120]
        \coordinate (A) at (0, 0);
        \coordinate (B) at (\side, 0);
        \coordinate (C) at (\side * cos{60} + \side, \side * sin{60});
        \coordinate (D) at (\side * cos{60}, \side * sin{60});
        \draw (A) -- (B) -- (C) -- (D) -- cycle;
        \node[above left, xshift=-1mm, yshift=1mm] at (B) {$\sigma_{2,3}$}; 
        \node[above left, xshift=-1mm, yshift=1mm] at (C) {$\sigma_{2,1}\sigma_{3,1}$}; 
        \coordinate (end1) at ($ (B) + (0:\lineLen) $);
        \coordinate (end2) at ($ (B) + (30:\lineLen) $);
        \coordinate (end3) at ($ (B) + (-30:\lineLen) $);
        \draw (B) -- (end1);
        \draw (B) -- (end2);
        \draw (B) -- (end3);
        \node[above left, xshift=-1mm, yshift=-1mm] at (end1) {$\sigma_{2,3}\sigma_{3,2}$}; 
        \node[left, xshift=-1mm] at (end2) {$\sigma_{23}\sigma_{31}$}; 
        \node[below left, xshift=3mm, yshift=4mm] at (end3) {$\sigma_{2,3}\sigma_{1,2}$}; 
    \end{scope}

    \begin{scope}[rotate=180]
        \coordinate (A) at (0, 0);
        \coordinate (B) at (\side, 0);
        \coordinate (C) at (\side * cos{60} + \side, \side * sin{60});
        \coordinate (D) at (\side * cos{60}, \side * sin{60});
        \draw (A) -- (B) -- (C) -- (D) -- cycle;
        \node[left, xshift=-1mm] at (B) {$\sigma_{2,1}$}; 
        \node[below left, xshift=-1mm, yshift=-1mm] at (C) {$\sigma_{2,1}\sigma_{2,3}$}; 
        \coordinate (end1) at ($ (B) + (0:\lineLen) $);
        \coordinate (end2) at ($ (B) + (30:\lineLen) $);
        \coordinate (end3) at ($ (B) + (-30:\lineLen) $);
        \draw (B) -- (end1);
        \draw (B) -- (end2);
        \draw (B) -- (end3);
        \node[left, xshift=-1mm] at (end1) {$\sigma_{2,1}\sigma_{1,2}$}; 
        \node[below left, xshift=-1mm, yshift=-1mm] at (end2) {$\sigma_{2,1}\sigma_{1,3}$}; 
        \node[above left, xshift=-1mm, yshift=1mm] at (end3) {$\sigma_{2,1}\sigma_{3,2}$}; 
    \end{scope}

    \begin{scope}[rotate=240]
        \coordinate (A) at (0, 0);
        \coordinate (B) at (\side, 0);
        \coordinate (C) at (\side * cos{60} + \side, \side * sin{60});
        \coordinate (D) at (\side * cos{60}, \side * sin{60});
        \draw (A) -- (B) -- (C) -- (D) -- cycle;
        \node[below left, xshift=-1mm, yshift=-1mm] at (B) {$\sigma_{3,1}$}; 
        \node[below, yshift=-1.4mm] at (C) {$\sigma_{3,2}\sigma_{1,2}$}; 
        \coordinate (end1) at ($ (B) + (0:\lineLen) $);
        \coordinate (end2) at ($ (B) + (30:\lineLen) $);
        \coordinate (end3) at ($ (B) + (-30:\lineLen) $);
        \draw (B) -- (end1);
        \draw (B) -- (end2);
        \draw (B) -- (end3);
        \node[below left, xshift=-1mm, yshift=-1mm] at (end1) {$\sigma_{3,1}\sigma_{1,3}$}; 
        \node[below, yshift=-1mm] at (end2) {$\sigma_{3,1}\sigma_{1,2}$}; 
        \node[left, xshift=-1mm] at (end3) {$\sigma_{3,1}\sigma_{2,3}$}; 
    \end{scope}

    \begin{scope}[rotate=300]
        \coordinate (A) at (0, 0);
        \coordinate (B) at (\side, 0);
        \coordinate (C) at (\side * cos{60} + \side, \side * sin{60});
        \coordinate (D) at (\side * cos{60}, \side * sin{60});
        \draw (A) -- (B) -- (C) -- (D) -- cycle;
        \node[below right, xshift=1mm, yshift=-1mm] at (B) {$\sigma_{3,2}$}; 
        \node[below right, xshift=1mm, yshift=-1mm] at (C) {$\sigma_{3,2}\sigma_{3,1}$}; 
        \coordinate (end1) at ($ (B) + (0:\lineLen) $);
        \coordinate (end2) at ($ (B) + (30:\lineLen) $);
        \coordinate (end3) at ($ (B) + (-30:\lineLen) $);
        \draw (B) -- (end1);
        \draw (B) -- (end2);
        \draw (B) -- (end3);
        \node[below right, xshift=1mm, yshift=-1mm] at (end1) {$\sigma_{3,2}\sigma_{2,3}$}; 
        \node[right, xshift=1mm] at (end2) {$\sigma_{3,2}\sigma_{2,1}$}; 
        \node[above right, xshift=-2mm, yshift=-4mm] at (end3) {$\sigma_{3,2}\sigma_{1,3}$}; 
    \end{scope}
\end{tikzpicture}

    \caption{Neighborhood of $e$ in the Cayley complex $\mathcal{C}(AJ_3,S)$}
\end{figure}

This means that $\mathcal{C}(AJ_3,S)$ is a cell complex where each vertices are surrounded by six quadrilateral $4$-cycles. 

Let $f$ be an embedding which corresponds each $4$-cycle on Cay$(AJ_3,S)$ to the boundary of a square (regular quadrilateral) in the hyperbolic plane $\mathbb{H}^2$ whose internal angles are equal to $\frac{ \pi}{3}$. 
The image of the map $f$ gives a tiling of $\mathbb{H}^2$ of type $\{4,6\}$. 
See Figure~\ref{fig:46tesse}. 

\begin{figure}[htb]
\label{fig:46tesse}
\includegraphics[width=.6\textwidth]{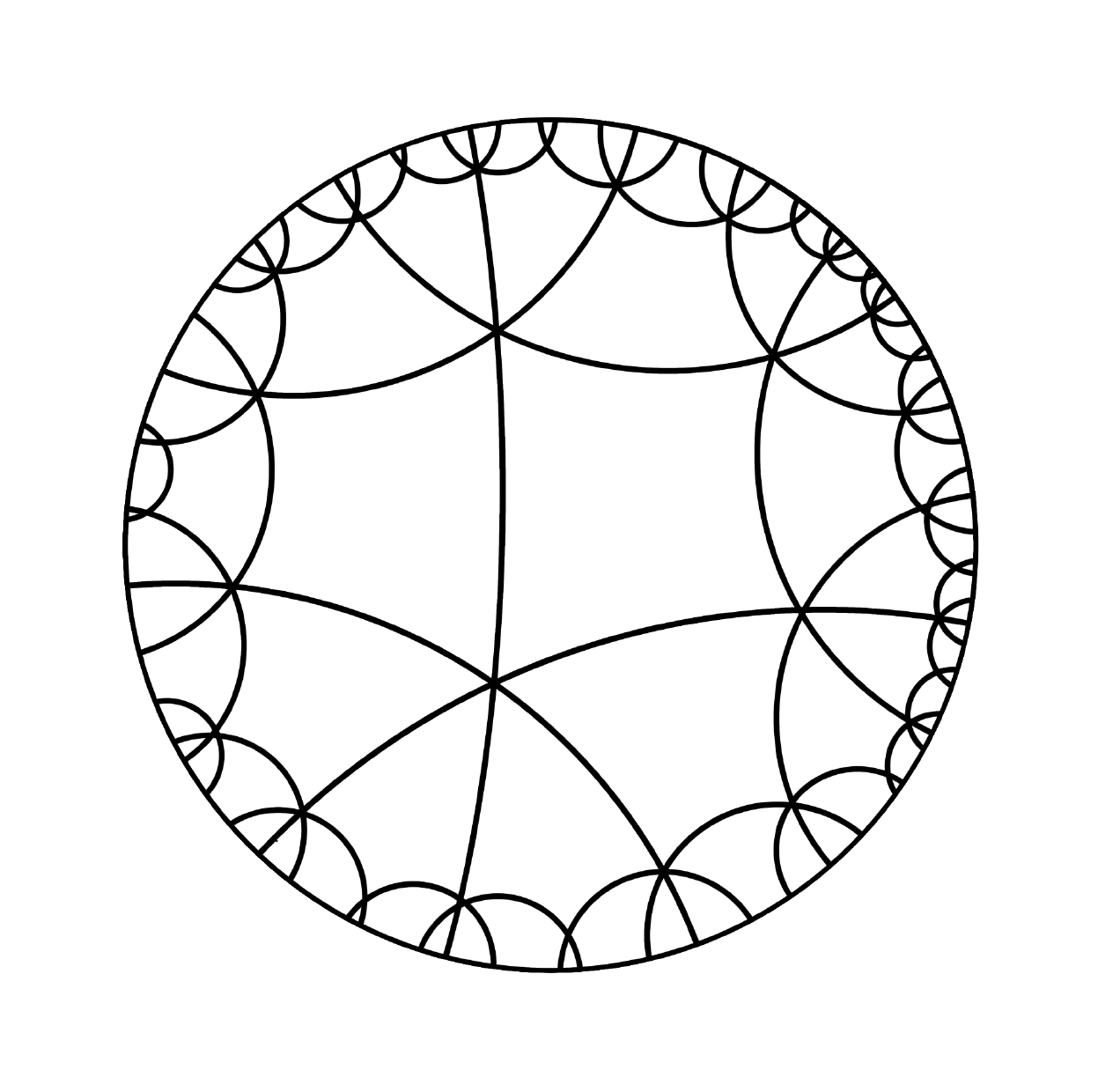}
\caption{The $\{4,6\}$-tesselation of the hyperbolic plane. }
\end{figure}

Let $x$ and $y$ be vertices in Cay$(AJ_3,S)$, and $\beta$ be the geodesic connecting $x$ to $y$. Then, $f(\beta)$ is a polyline thet connects $f(x)$ to $f(y)$ by the shortest distance. 
Let $\alpha$ be the geodesic in $\mathbb{H}^2$ which connects $f(x)$ and $f(y)$. 
See Figure~\ref{polyline} for example. 

\begin{figure}[htb]
\label{polyline}
    \centering
        \begin{overpic}[width=.6\textwidth]{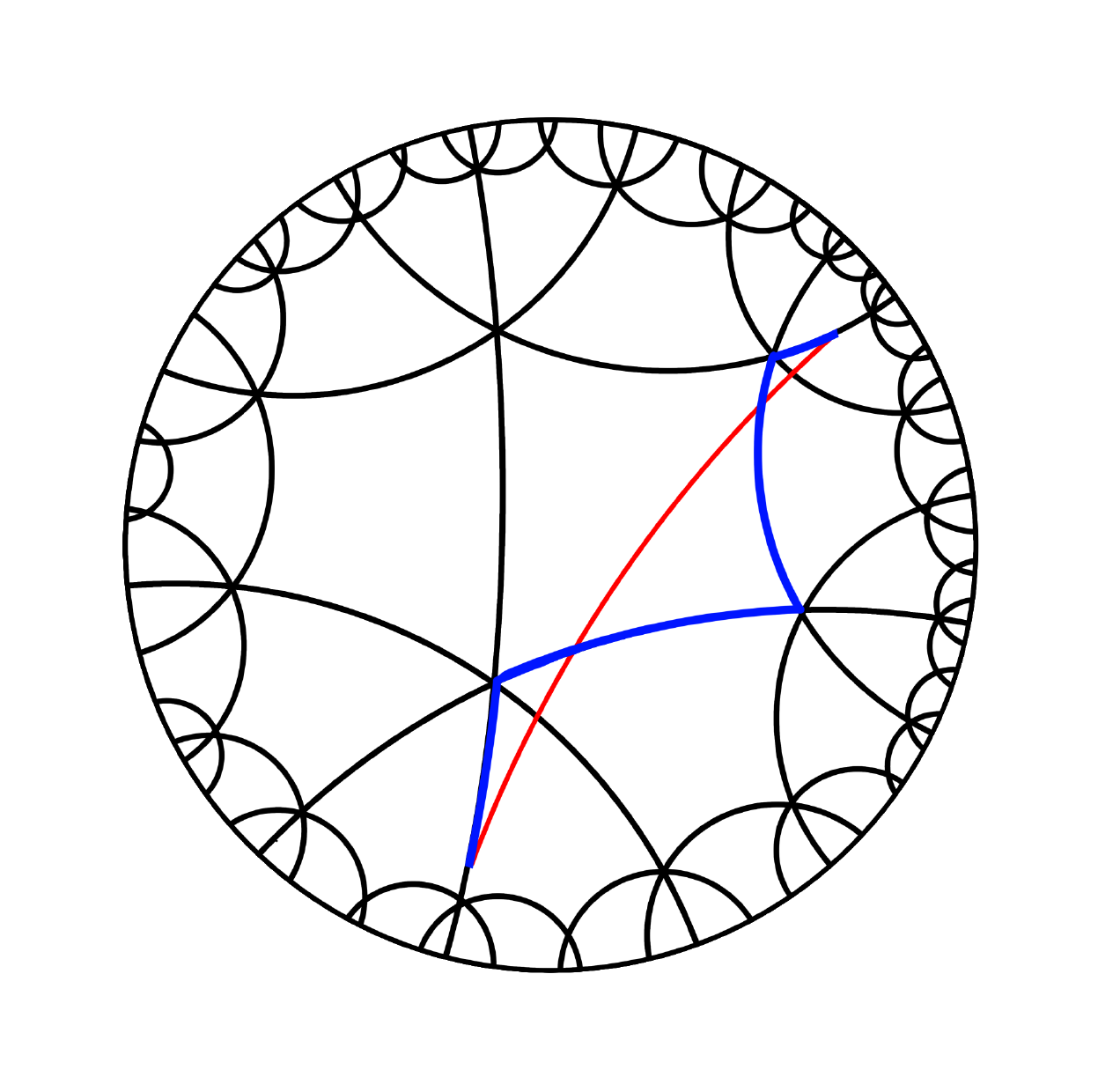}
            \put(65,38){$\beta$}
            \put(58,55){$\alpha$}
            \put(35,25){\small{$f(x)$}}
            \put(73,65){\footnotesize{$f(y)$}}
        \end{overpic}
\caption{Geodesic and polyline connecting $f(x)$ to $f(y)$.}
\end{figure}


Then, by direct hyperbolic geometrical calculation, we can find that $\alpha$ is quasi-isometric to $\beta$. 
Since $\beta$ is isometric to $f(\beta)$, then $\beta$ is quasi-isometric to $\alpha$. 
This concludes that $f$ is a quasi-isometric embedding. 
Therefore, 
Cay$(AJ_3, S)$ is quasi-isometric to $\mathbb{H}^2$. 

\end{proof}

\begin{proof}[Proof of Theorem~\ref{AJ_hyp}]
    By Lemma~\ref{CayAJ_3_Hyp_plane}, Cay$(AJ_3,S)$ is quasi-isometric to $\mathbb{H}^2$. 
    Since $\mathbb{H}^2$ is Gromov hyperbolic, 
    Proposition~\ref{QIandHyp} concludes that Cay$(AJ_3,S)$ is a Gromov hyperbolic space. 
    By the definition of the hyperbolic group, $AJ_3$ is a hyperbolic group. 
\end{proof}








\section*{Acknowledgments}
The author would like to thank his supervisor professor Kazuhiro Ichihara. He also would like to thank Carl-Fredrik Nyberg Brodda for inspiring this research and thanks Anthony Genevois for letting him know about the preprint \cite{genevois2025graphproductscactusgroups}. 

\bibliographystyle{amsplain}
\bibliography{ref}

@misc{devadoss2000tessellationsmodulispacesmosaic,
      title={Tessellations of Moduli Spaces and the Mosaic Operad}, 
      author={Satyan L. Devadoss},
      year={2000},
      eprint={math/9807010},
      archivePrefix={arXiv},
      primaryClass={math.AG},
      url={https://arxiv.org/abs/math/9807010}, 
}

@article {genevois2022cactusgroupsviewpointgeometric,
    AUTHOR = {Genevois, Anthony},
     TITLE = {Cactus groups from the viewpoint of geometric group theory},
   JOURNAL = {Topology Proc.},
  FJOURNAL = {Topology Proceedings},
    VOLUME = {66},
      YEAR = {2025},
     PAGES = {59--103},
      ISSN = {0146-4124,2331-1290},
   MRCLASS = {20F65 (20F10 20F67)},
  MRNUMBER = {4874027},
}

@book {AT,
    AUTHOR = {Hatcher, Allen},
     TITLE = {Algebraic topology},
 PUBLISHER = {Cambridge University Press, Cambridge},
      YEAR = {2002},
     PAGES = {xii+544},
      ISBN = {0-521-79160-X; 0-521-79540-0},
   MRCLASS = {55-01 (55-00)},
  MRNUMBER = {1867354},
MRREVIEWER = {Donald\ W.\ Kahn},
}

@misc{chemin2025combinatoricsaffinecactusgroups,
      title={Combinatorics of affine cactus groups}, 
      author={Hugo Chemin},
      year={2025},
      eprint={2501.16270},
      archivePrefix={arXiv},
      primaryClass={math.CO},
      url={https://arxiv.org/abs/2501.16270}, 
}

@misc{ilin2024modulispacecactusflower,
      title={The moduli space of cactus flower curves and the virtual cactus group}, 
      author={Aleksei Ilin and Joel Kamnitzer and Yu Li and Piotr Przytycki and Leonid Rybnikov},
      year={2024},
      eprint={2308.06880},
      archivePrefix={arXiv},
      primaryClass={math.AG},
      url={https://arxiv.org/abs/2308.06880}, 
}

@book {Geom_gr_th,
    AUTHOR = {L\"oh, Clara},
     TITLE = {Geometric group theory},
    SERIES = {Universitext},
      NOTE = {An introduction},
 PUBLISHER = {Springer, Cham},
      YEAR = {2017},
     PAGES = {xi+389},
      ISBN = {978-3-319-72253-5; 978-3-319-72254-2},
   MRCLASS = {20-01 (20F65 53C23 57M07)},
  MRNUMBER = {3729310},
       DOI = {10.1007/978-3-319-72254-2},
       URL = {https://doi.org/10.1007/978-3-319-72254-2},
}

@misc{bonifant2018groupactionsdivisorsplane,
      title={Group Actions, Divisors, and Plane Curves}, 
      author={Araceli Bonifant and John Milnor},
      year={2018},
      eprint={1809.05191},
      archivePrefix={arXiv},
      primaryClass={math.AG},
      url={https://arxiv.org/abs/1809.05191}, 
}

@book {MR1744486,
    AUTHOR = {Bridson, Martin R. and Haefliger, Andr\'e},
     TITLE = {Metric spaces of non-positive curvature},
    SERIES = {Grundlehren der mathematischen Wissenschaften [Fundamental
              Principles of Mathematical Sciences]},
    VOLUME = {319},
 PUBLISHER = {Springer-Verlag, Berlin},
      YEAR = {1999},
     PAGES = {xxii+643},
      ISBN = {3-540-64324-9},
   MRCLASS = {53C23 (20F65 53C70 57M07)},
  MRNUMBER = {1744486},
MRREVIEWER = {Athanase\ Papadopoulos},
       DOI = {10.1007/978-3-662-12494-9},
       URL = {https://doi.org/10.1007/978-3-662-12494-9},
}

@misc{genevois2025cat0cubecomplexesreplaced,
      title={Why CAT(0) cube complexes should be replaced with median graphs},
      author={Anthony Genevois},
      year={2025},
      eprint={2309.02070},
      archivePrefix={arXiv},
      primaryClass={math.GR},
      url={https://arxiv.org/abs/2309.02070},
}

@article {ChepoiCube,
    AUTHOR = {Chepoi, Victor},
     TITLE = {Graphs of some {${\rm CAT}(0)$} complexes},
   JOURNAL = {Adv. in Appl. Math.},
  FJOURNAL = {Advances in Applied Mathematics},
    VOLUME = {24},
      YEAR = {2000},
    NUMBER = {2},
     PAGES = {125--179},
      ISSN = {0196-8858,1090-2074},
   MRCLASS = {57M07 (05C10 20F67)},
  MRNUMBER = {1748966},
       DOI = {10.1006/aama.1999.0677},
       URL = {https://doi.org/10.1006/aama.1999.0677},
}

@misc{genevois2025graphproductscactusgroups,
      title={Beyond graph products and cactus groups: quandle products of groups}, 
      author={Anthony Genevois},
      year={2025},
      eprint={2505.09194},
      archivePrefix={arXiv},
      primaryClass={math.GR},
      url={https://arxiv.org/abs/2505.09194}, 
}

\end{document}